\newtheorem{theorem}{Theorem}
\newtheorem{lemma}[theorem]{Lemma}
\begin{document}
\sloppy

\title{The total external branch length of Beta-coalescents\footnote{Work partially supported by the DFG Priority Programme SPP 1590 ``Probabilistic Structures in Evolution''.  I. S. is supported by the  German Academic Exchange Service (DAAD).}}
 
\author{ G\"otz Kersting%
\thanks{Institut f\"ur Mathematik, Goethe-Universit\"at, 60054 Frankfurt am Main, Germany. \newline\texttt{kersting@math.uni-frankfurt.de, stanciu@math.uni-frankfurt.de, wakolbinger@math.uni-frankfurt.de}
} \qquad
 Iulia Stanciu$^\dagger$%
 \qquad
Anton Wakolbinger$^\dagger$%
}
  \maketitle
\begin{abstract}
For $1<\alpha <2$ we derive the  asymptotic distribution of the total length of {\em external} branches of a Beta$(2-\alpha, \alpha)$-coalescent as the number $n$ of leaves becomes large. It turns out  the fluctuations of the external branch length follow those of $\tau_n^{2-\alpha}$ over the entire parameter regime, where $\tau_n$ denotes the random number of coalescences that bring the $n$ lineages down to one. This is in contrast to the fluctuation behavior of the total branch length, which exhibits a transition at $\alpha_0 = (1+\sqrt 5)/2$ (\cite{ke}).
\end{abstract}
\begin{small}
\emph{MSC 2000 subject classifications.}    60K35, 60F05, 60J10  \\
\emph{Key words and phrases.}  coalescent, external branch length, asymptotic distribution, fluctuations.
\end{small}

\newpage

\section{Introduction and main results}\label{Intro}

The family of Beta coalescents belongs to the so-called $\Lambda$-coalescents introduced by Pitman \cite{pi} and Sagitov \cite{sa}, see the survey of N. Berestycki \cite{nber}. These are characterized by a probability measure $\Lambda(dp)$ on $[0,1]$. For $\Lambda = \delta_0$ one recovers the classical Kingman coalescent. For $\Lambda$ having no mass in $0$ they can be thought as modelling a random gene tree within a species in which single reproduction events affect a non-vanishing fraction of the population (see \cite{bobo,elwa,wa} for applications to certain maritime species). The basic ingredient for the coalescent dynamics is a Poisson process on $\mathbb R \times [0,1]$ with intensity measure  $dt \, \nu(dp) dt = dt\, p^{-2}  \Lambda(dp) $. With this Poisson process as a random input, the $(\Lambda,n)$-coalescent arises as follows: Initially there are $n$ lineages (or ``particles''), and whenever a Poisson point $(t,p)$ arrives, each of the lineages that exist at time $t$ independently takes part in the corresponding coalescence event with probability $p$. By assumption, the pair coalescence rate equals 1, so the events that are relevant for coalescences within the sample arrive at finite rate.   (More formally, this can be viewed as a partition-valued Markov process, see below.) 

For the choice $\nu(dp) = {\rm const} (p/(1-p))^\alpha$, $0<\alpha < 2$, the probability measure $\Lambda$ is the Beta($2-\alpha, \alpha$)-distribution.  For $\alpha=1$ one obtains the Bolthausen-Sznitman coalescent \cite{bo} and in the limit $\alpha \to 2$ one retrieves Kingman's coalescent. Beta coalescents play a prominent role because of their intimate connections to $\alpha$-stable continuum branching processes (see \cite{bi}) and  also figure as prominent alternatives to Kingman's coalescent from a statistical point of view, see \cite{bibla1, bibla2}. 

Recently there has been considerable progress in investigating the asymptotic distribution (as the sample size $n \to \infty$) of interesting functionals of Beta coalescents, such as  the total tree length $L_n$ or the  total length $\ell_n$ of all {\em external branches}, i.e. those branches which end in a leaf. These quantities are also of interest in population genetics, since in the infinite sites model $L_n$ figures as the (random) Poisson parameter of the total number of segregating sites in the sample, whereas $\ell_n$ is the (random) Poisson parameter of the total number of mutations carried by single individuals.

In this paper we will investigate the asymptotic distribution of the suitably normalized total external branch length $\ell_n$ in a (Beta($2-\alpha,\alpha$), $n$)-coalescent, with $1< \alpha < 2$. In order to state our main result and to put it into context with previous research, we give a few more formal definitions.


With a labelling of the initial particles by the numbers $1, \ldots,n$, the merging process is described by a sequence   $\Pi_0, \ldots, \Pi_{\tau_n}$ of partitions of $\{1, \ldots, n \}$. Here $\Pi_k$ consists of the classes of a (random) equivalence relation $\sim_k$ on $\{ 1, \ldots,n\}$, where $i \sim_k j$ states that $i,j \in \{1, \ldots, n\}$ have coalesced into one particle after $k$ merging events. In particular, $\Pi_0=\{ \{1\}, \ldots, \{n\}\}$ and $\Pi_{\tau_n}= \{\{1,\ldots,n\}\}$, where $\tau_n$ denotes the (random) total number of merging events.

The process proceeds in continuous time. At times 
$0= T_0 < T_1 < \cdots < T_{\tau_n}   $ particles merge, 
and $n=X_0 > X_1 > \cdots > X_{\tau_n}=1$ are the corresponding numbers of particles, thus 
\[X_k= \# \Pi_k\ . \] 
Note that the times $T_k$ and numbers $X_k$ depend on $n$ (which we suppress in the notation). For convenience we put $X_k=1$ for $k > \tau_n$. The total branch length of the tree is given by
\begin{align*} L_n = \sum_{k=0}^{\tau_n-1} X_{k}(T_{k+1}-T_{k})  \ .
\end{align*}
whereas the {\em total length $\ell_n$ of all external branches} can be written as
\begin{align} 
\ell_n = \sum_{k=0}^{\tau_n-1} Y_{k}(T_{k+1}-T_{k})  \ ,
\label{exlength}
\end{align}
with
\[ Y_k =  \# \{ i \le n: \{i\} \in \Pi_k \} \ , \]
which is the number of external branches, still present up to time $T_k$. Note that $Y_{\tau_n}=0$. For definiteness let $Y_k=0$ for $k>\tau_n$. 

The asymptotic distribution of $L_n$ and $\ell_n$ has been studied in various publications. Drmota et al \cite{dr} and Iksanov and M\"ohle \cite{ikmo} studied the total length of the Bolthausen-Sznitman coalescent. M\"ohle's investigation \cite{mo} covers the case $0<\alpha < 1$. In that case there is no substantial difference in the asymptotic behavior of $L_n$ and $\ell_n$, more precisely
\[ \frac {L_n} n \ \stackrel d\to \ S \quad \text{and}\quad \frac {\ell_n} n \ \stackrel d\to \ S  \]
with
\[ S= \int_0^\infty \exp(-\xi_t) \, dt \ , \]
where $(\xi_t)_{t \ge 0}$ denotes a certain driftless subordinator, depending on $\alpha$.

In case $1< \alpha < 2$ the situation is notably different. For this case Berestycki et al \cite{bebesw2,bebesw} (see the Theorems 9 and 1.9 therein) give the following results:
\[ \frac {L_n}{n^{2-\alpha}} \ \to \ \frac{\alpha(\alpha-1)\Gamma(\alpha)}{2-\alpha} \quad \text{and} \quad \frac {\ell_n}{L_n} \ \to \ 2-\alpha \]
in probability, see also Dhersin and Yuan \cite{deyu}. We shall see that the difference to the case $0<\alpha < 1$ become even more visible on the level of fluctuations.

Let $\varsigma$ denote a real-valued stable random variable with index  $ 1<\alpha<2 $, which is normalized by the properties
\begin{align} 
\mathbf E(\varsigma)=0 \ , \quad \mathbf P( \varsigma > x)=o( x^{-\alpha})\ , \quad \mathbf P( \varsigma<-x)\sim x^{-\alpha}  \label{stable}
\end{align}
for $x \to \infty$. Thus $\varsigma$ is maximally skewed among the stable distributions of index~$\alpha$. 

\newpage

Also let
\[ c_1 = \alpha(\alpha-1)\Gamma(\alpha) \ , \quad c_2 = \frac{\alpha(2-\alpha)(\alpha-1)^{\frac 1\alpha +1}\Gamma(\alpha)}{\Gamma(2-\alpha)^{\frac 1\alpha}} \ . \]

\begin{theorem}
\label{mainresult}
For the Beta-coalescent with $1< \alpha < 2$
\begin{align}\label{extfluc}
\frac {\ell_n - c_1 n^{2-\alpha}}{n^{\frac 1\alpha +1-\alpha}}\ \stackrel d\to\ c_2 \varsigma \ .
\end{align} 
\end{theorem}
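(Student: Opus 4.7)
My plan is to show that $\ell_n$ is asymptotically a smooth function of $\tau_n$, specifically $\ell_n \approx c_1 (\alpha-1)^{\alpha-2}\tau_n^{2-\alpha}$ up to errors of smaller order than $n^{1/\alpha + 1 - \alpha}$, and then to apply the stable limit theorem for $\tau_n$. First, conditionally on the jump chain $(\Pi_k)_{k=0}^{\tau_n}$, the waiting times $T_{k+1}-T_k$ are independent $\mathrm{Exp}(\lambda_{X_k})$, so one decomposes $\ell_n = \widehat\ell_n + M_n$ with $\widehat\ell_n := \sum_{k=0}^{\tau_n-1}Y_k/\lambda_{X_k}$ and $M_n$ a sum of conditionally independent centred variables. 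Using the classical asymptotic $\lambda_m \sim m^\alpha/(\alpha\Gamma(\alpha))$ and $Y_k\le X_k$, together with the LLN-type path $X_k\approx n(1-k/\tau_n)$, the conditional variance of $M_n$ is bounded by $\sum_k Y_k^2/\lambda_{X_k}^2 = O(n^{3-2\alpha})$, giving $M_n = O_p(n^{3/2-\alpha}) = o_p(n^{1/\alpha+1-\alpha})$ since $1<\alpha<2$.

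Next I would analyse the singleton process $Y_k$. Conditionally on the merge sizes $J_k = X_{k-1}-X_k+1$, one has the hypergeometric thinning $\mathbf{E}[Y_{k+1}\mid \mathcal F_k, J_{k+1}] = Y_k(1-J_{k+1}/X_k)$; iterating and linearising yields $\log(Y_k/n)\approx -\sum_{i\le k}J_i/X_{i-1} = -\sum W_i/X_{i-1} - \sum 1/X_{i-1}$. Riemann-sum/telescoping arguments identify the first sum as $\log(n/X_k)$ and (via $X_i\approx n-i/(\alpha-1)$, using $\mathbf{E}[W]=1/(\alpha-1)$) the second as $(\alpha-1)\log(n/X_k)$, so $Y_k\approx n^{1-\alpha}X_k^\alpha$. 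Substituting into $\widehat\ell_n$, the factor $X_k^\alpha$ cancels with $\lambda_{X_k}$, so $Y_k/\lambda_{X_k}\approx\alpha\Gamma(\alpha)n^{1-\alpha}$ is asymptotically constant in $k$ and $\widehat\ell_n\sim\alpha\Gamma(\alpha)n^{1-\alpha}\tau_n\sim c_1 n^{2-\alpha}$.

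The decisive (and hardest) step is the next-order correction that produces the factor $(2-\alpha)$ in $c_2$. Conditional on $\tau_n=(\alpha-1)n+\Delta$, the empirical mean $(n-1)/\tau_n$ of $W$ is tilted from $1/(\alpha-1)$ by $O(\Delta/n)$, so the effective ratio $\bar J/\bar W = 1 + 1/\bar W$ becomes $\alpha + \Delta/n + o(1/n)$, which shifts the exponent in $Y_k \approx n^{1-\alpha}X_k^{\alpha + \Delta/n}$. Carrying this refined approximation through the Riemann sum and using $\int_0^1 (1-s)^\varepsilon\,ds = 1/(1+\varepsilon)$, one finds $\widehat\ell_n \approx \alpha\Gamma(\alpha)n^{1-\alpha}\tau_n(1-\Delta/n) = c_1 n^{2-\alpha} + \alpha(2-\alpha)\Gamma(\alpha)n^{1-\alpha}\Delta + o_p(n^{1/\alpha+1-\alpha})$, which is precisely the first-order Taylor expansion of $c_1(\alpha-1)^{\alpha-2}\tau_n^{2-\alpha}$ around $\tau_n=(\alpha-1)n$.

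Finally, invoking the stable limit theorem for $\tau_n$ (a renewal-type argument exploiting the heavy tail $\mathbf{P}[W>w]\sim w^{-\alpha}/\Gamma(2-\alpha)$ and finite mean $1/(\alpha-1)$ of the block-counting jumps) gives $(\tau_n - (\alpha-1)n)/n^{1/\alpha}\stackrel d\to (\alpha-1)^{1/\alpha+1}\Gamma(2-\alpha)^{-1/\alpha}\,\varsigma$; combining with the earlier steps delivers the stated limit with exactly the constant $c_2$. The main technical difficulty lies in making the expansions above rigorous with errors controlled at the correct scale $o(n^{1/\alpha+1-\alpha})$, and in handling boundary effects near $k=\tau_n$ where $X_k$ becomes small and the continuum approximations degrade.
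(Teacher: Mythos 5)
Your overall strategy is the same as the paper's: reduce $\ell_n$ to a deterministic function of the number of mergers, $\ell_n = \alpha\Gamma(\alpha)(\alpha-1)^{\alpha-1}\tau_n^{2-\alpha}+o_P(n^{\frac1\alpha+1-\alpha})$, Taylor-expand, and feed in the stable limit for $\tau_n$ (your normalization of that limit and all constants, including the final $c_2$, check out). The variance bounds for the exponential waiting times and for the hypergeometric noise in $Y_k$ at order $O_P(n^{3/2-\alpha})$ also match the paper. The gap is in the step you yourself call decisive. The quantity $\sum_k Y_k/\lambda_{X_k}\approx\alpha\Gamma(\alpha)\sum_k X_k^{1-\alpha}\prod_{i\le k}(1-1/X_i)$ depends on the whole path $(X_k)$, whose fluctuations around the line $\gamma(\tau_n-k)$, $\gamma=1/(\alpha-1)$, are of order $(\tau_n-k)^{1/\alpha}$ at every scale. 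Inserted linearly into the sum these fluctuations a priori contribute at order $n^{1-\alpha}\cdot n^{1/\alpha}=n^{\frac1\alpha+1-\alpha}$ --- exactly the scale of the claimed limit law. Your ``tilted empirical mean'' argument conditions on the endpoint $\Delta=\tau_n-(\alpha-1)n$ and then treats the conditional path as deterministic; it accounts for the drift induced by $\Delta$ but says nothing about the residual (bridge) fluctuations, which live at the critical order. What saves the theorem is an exact cancellation: expanding both $X_k^{1-\alpha}$ and $\prod_{i\le k}(1-1/X_i)$ to first order in $X_i-\gamma(\tau_n-i)$ and interchanging the order of summation, the double sum telescopes,
\begin{align*}
\sum_{k=1}^{\tau_n-1}\sum_{i=1}^{k}\frac{X_i-\gamma(\tau_n-i)}{\gamma^2(\tau_n-i)^2}=\sum_{i=1}^{\tau_n-1}\frac{X_i-\gamma(\tau_n-i)}{\gamma^2(\tau_n-i)}\ ,
\end{align*}
and this identically offsets the linear term coming from $X_k^{1-\alpha}$. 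This ``asymptotic flatness'' of the functional at the LLN path is the reason $\ell_n$ depends on the tree only through $\tau_n$ to the required precision; without identifying and proving it, your expansion leaves an unestimated term of the same order as the limit, and the reduction to $\tau_n$ is unjustified. Deferring this to ``making the expansions rigorous'' understates it: it is not bookkeeping but the central idea of the proof.

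A secondary, repairable issue: your linearisation $\log(1-J_i/X_{i-1})\approx -J_i/X_{i-1}$ is not uniformly valid, because the merger sizes $J_i$ are heavy-tailed with index $\alpha<2$; the accumulated second-order error $\sum_i(J_i/X_{i-1})^2$ is $O_P(1)$, not $o_P(1)$, in the exponent of $Y_k/n$, i.e.\ a constant-factor error in $Y_k$. It happens to cancel against the identical error made when you replace $\sum_i W_i/X_{i-1}$ by $\log(n/X_k)$, but the clean route is the exact identity $\mathbf E[Y_k\mid X]=X_k\prod_{i\le k}(1-1/X_i)$, in which every factor is uniformly close to $1$ and the Taylor expansion is controlled by $\sum_i X_i^{-2}=O_P((\tau_n-k)^{-1})$. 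Finally, the uniform control of $X_{\tau_n-j}-\gamma j$ over all $j\le\tau_n$ (needed both for the cancellation above and for the boundary region near the root, where $X_k$ is small) requires a genuine coupling/renewal argument and is not a consequence of a plain LLN.
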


\noindent
The corresponding statement for the Kingman coalescent can be found in Janson and Kersting \cite{jake}: up to a logarithmic correction in scale the analogue of \eqref{extfluc} also holds for the case $\alpha = 2$, then with a normal limiting distribution. 

It is of interest to contrast Theorem 1 with the corresponding statement for the total length $L_n$, obtained in Kersting \cite{ke}. For convenience we reproduce it here.

\begin{theorem}
\label{mainresult2}
Let
$ c_i' = \frac{c_i}{2-\alpha}$, $ i=1,2$. 
Then for $1< \alpha < 2${\em:}
\begin{enumerate}[{\em(i)}]
\item
If $1< \alpha < \tfrac 12 (1+ \sqrt 5)$ {\em (}thus $1+\alpha -\alpha^2>0${\em )}, then
\[ \frac  {L_n-  c_1'n^{2-\alpha}}{n^{\frac 1\alpha + 1 -\alpha}} \ \stackrel{d}{\to}  \ \frac{c_2'\varsigma}{(1+\alpha -\alpha^2)^{\frac 1\alpha}} \ . \]
\item
If $\alpha=\tfrac 12 (1+ \sqrt 5)$, then
\[ \frac{L_n-  c_1'n^{2-\alpha}}{(\log n)^{\frac 1\alpha} } \ \stackrel{d}{\to}\ c_2'\varsigma \ . \]
\item
If $\tfrac 12 (1+ \sqrt 5) < \alpha < 2$, then 
\[L_n-  c_1'n^{2-\alpha}\ \stackrel{d}{\to}\ \eta \ , \]
where $\eta$ is a non-degenerate random variable.
\end{enumerate}
\end{theorem}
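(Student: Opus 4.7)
The plan is to decompose the randomness of $L_n$ into a jump-chain part and a holding-time part, then analyse each separately. Given the Markov chain $(X_k)$ of block numbers, the waiting times $T_{k+1}-T_k$ are conditionally independent Exp$(\lambda_{X_k})$, where $\lambda_b$ (the total coalescence rate out of $b$ blocks) satisfies $\lambda_b \sim c_\lambda b^\alpha$. I first write
\[
L_n = \sum_{k=0}^{\tau_n-1} \frac{X_k}{\lambda_{X_k}} + M_n^{(h)}, \qquad M_n^{(h)} := \sum_{k=0}^{\tau_n-1} X_k\Bigl((T_{k+1}-T_k) - \tfrac{1}{\lambda_{X_k}}\Bigr),
\]
and observe that $M_n^{(h)}$ is a martingale whose conditional variance $\sum_k X_k^2/\lambda_{X_k}^2 \sim \sum_k X_k^{2-2\alpha}$, after inserting the deterministic approximation $X_k \sim n-k\mu$ with $\mu := \lim_{b\to\infty}\mathbf E[X_k-X_{k+1}\mid X_k=b]$ (finite since $\alpha>1$), is of order $n^{3-2\alpha}$ for $\alpha<\tfrac32$ and $O(1)$ for $\alpha>\tfrac32$. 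In both regimes this is compatible with the target fluctuation scale.

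Next I would analyse $\Psi_n := \sum_{k=0}^{\tau_n-1} f(X_k)$ with $f(b) := b/\lambda_b \sim c_f b^{1-\alpha}$. Setting $\bar X_k := n-k\mu$ and $W_k := X_k - \bar X_k = \sum_{j<k}(\mu - D_j)$ with $D_j := X_j - X_{j+1}$, a Taylor expansion gives
\[
\Psi_n = \sum_{k=0}^{\tau_n-1} f(\bar X_k) + \sum_{k=0}^{\tau_n-1} f'(\bar X_k)\,W_k + R_n,
\]
where the first sum is a Riemann approximation yielding the leading-order centring $c_1' n^{2-\alpha}$ and $R_n$ is a second-order remainder (plus a correction for $\tau_n \ne \lfloor n/\mu\rfloor$). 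The linear term is the crux: Abel summation converts it into $\Sigma_n := \sum_{j=0}^{\tau_n-1} (\mu-D_j)\,w_j$ with weights $w_j := \sum_{k>j} f'(\bar X_k) \sim c_w (n-j\mu)^{1-\alpha}$. Because $\mathbf P(D_j = j'-1\mid X_j=b) \sim \Gamma(j'-\alpha)/(c_\lambda b^\alpha \Gamma(j'+1))$ for large $b$, the increments $\mu - D_j$ have a maximally left-skewed $\alpha$-stable tail, placing $\Sigma_n$ in the framework of weighted sums of martingale differences in the domain of attraction of $\varsigma$.

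The three cases of the theorem now emerge from the scale $\bigl(\sum_j |w_j|^\alpha\bigr)^{1/\alpha}$, since
\[
\sum_{j=0}^{\lfloor n/\mu\rfloor} (n-j\mu)^{\alpha-\alpha^2} \;\asymp\; \begin{cases} n^{1+\alpha-\alpha^2}/(1+\alpha-\alpha^2), & 1+\alpha-\alpha^2 > 0, \\[2pt] \log n, & 1+\alpha-\alpha^2 = 0, \\[2pt] \text{convergent}, & 1+\alpha-\alpha^2 < 0. \end{cases}
\]
In case (i) a weighted stable limit theorem yields $\Sigma_n/n^{1/\alpha+1-\alpha} \stackrel{d}{\to} c_2'\varsigma/(1+\alpha-\alpha^2)^{1/\alpha}$, and in case (ii) the same machinery with $(\log n)^{1/\alpha}$ normalisation produces the limit $c_2'\varsigma$. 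In case (iii) the exponent $1/\alpha + 1-\alpha$ is negative for $\alpha>\tfrac12(1+\sqrt5)$, so the ``bulk'' contribution from macroscopic $X_j$ is $o(1)$; the $O(1)$ weak limit $\eta$ is instead carried by the endgame of the chain (small $n-j\mu$), together with the $O(1)$ limits of $M_n^{(h)}$ and $R_n$.

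The hardest part will be matching the linearisation (valid while $X_k$ is macroscopic) to the endgame when $X_k$ is of order $1$. For cases (i)--(ii) I would need sharp quantitative estimates — for instance moment or concentration bounds on $\sup_k|W_k|$ and on $\tau_n - \lfloor n/\mu\rfloor$ — to show that the endgame contribution is lower-order than the target scale and that the truncation required to make sense of the infinite-variance remainder $R_n$ does not leak into the limit. For case (iii) the challenge inverts: I would need to identify $\eta$ explicitly via a weak limit of the reversed-time block-count process $(X_{\tau_n - m})_{m\ge 0}$ near absorption, show that the bulk fluctuation is asymptotically negligible, and verify joint convergence of the endgame variables with $M_n^{(h)}$ and $R_n$. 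Along the way, justifying the stable CLT for the time-inhomogeneous, heavy-tailed differences $(\mu-D_j)w_j$ (whose conditional law depends on the random $X_j$) is nontrivial but should follow from standard domain-of-attraction tools once the trajectory of the chain is under control.
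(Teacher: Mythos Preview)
This paper does not contain a proof of Theorem~\ref{mainresult2}. The theorem is explicitly quoted from Kersting~\cite{ke} (``For convenience we reproduce it here''), and all the work in the present paper is devoted to Theorem~\ref{mainresult} on the \emph{external} length $\ell_n$ and to Theorem~\ref{mainresult3}. So there is no ``paper's own proof'' of this statement to compare your proposal against.

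That said, a brief remark on your sketch versus what is actually done in~\cite{ke}. Your decomposition into a holding-time martingale $M_n^{(h)}$ and a jump-chain functional $\Psi_n=\sum f(X_k)$, followed by a linearisation around $\bar X_k=n-k\mu$ and Abel summation into a weighted stable sum, captures the correct heuristics and in particular identifies the golden-ratio transition via the exponent $1+\alpha-\alpha^2$ in $\sum_j (n-j\mu)^{\alpha-\alpha^2}$. The approach in~\cite{ke}, however, is organised differently: rather than linearising around a deterministic trajectory, it constructs the block-counting process ``downwards from infinity'' as a point process $\mu$ on $[2,\infty)$ and couples it to a stationary renewal process (the tools you see reused here in the proof of Lemma~\ref{uniformEstimate}). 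This coupling gives uniform control of $X_{\tau_n-j}-\gamma j$ directly, and the limit $\eta$ in case~(iii) is then identified through this limiting point process near the root, not through a separate endgame analysis glued to a bulk expansion.

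The part of your plan most at risk is exactly the matching you flag yourself: in case~(iii) the bulk linearisation contributes $o(1)$ and the entire limit $\eta$ lives in the region where $X_k=O(1)$, so the Taylor expansion around $\bar X_k$ is no longer meaningful there, and your remainder $R_n$ cannot be treated as lower order. A workable proof needs a direct description of the reversed chain $(X_{\tau_n-m})_{m\ge 0}$ together with its holding times, which is precisely what the point-process coupling in~\cite{ke} supplies.
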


\noindent
The contrast between the two theorems is striking. The different regimes in Theorem \ref{mainresult2}  and the transition at the golden ratio $\alpha_0=\tfrac 12 (1+ \sqrt 5)$  is no longer present in Theorem \ref{mainresult}. Instead we observe that $|\ell_n-c_1n^{2-\alpha}|$ converges to 0 or $\infty$, depending on whether $\alpha > \alpha_0  $ or $\alpha < \alpha_0$. This can be understood as follows: There are two sources of randomness at work in $L_n$. On the one hand fluctuations arise from the randomness within the 'topology' of the tree, more precisely from the random variables  $X_0 > X_1 > \cdots >X_{\tau_n}$. These fluctuations are of order $n^{\frac 1\alpha + 1 - \alpha}$. On the other hand also the waiting times $T_{k+1}-T_k$ contribute to the random fluctuations. Most substantial are those fluctuations arising close to the root of the tree, that is the times $T_{k+1}-T_k$ with $k$ close to $\tau_n$. These fluctuations are of order 1. Now either of these fluctuations may dominate  the other, depending on the sign of $\frac 1\alpha +1-\alpha$. This is reflected in Theorem 2. 

On the other hand the just mentioned  fluctuations of order one do not show up asymptotically in $\ell_n$. This is simply due to the fact that branches close to the root of the tree typically are {\em internal} branches. 

The fact  that for $\alpha > \alpha_0=1.61$ the external length $\ell_n$ is decoupled from the total length $L_n$ is illustrated by the plots in Figure \ref{Fig1}.\\\\\\
\psfrag{l}{$\ell_n$}
\psfrag{L}{$L_n$}
\psfrag{k}{$K_n^{2-\alpha}$}
\psfrag{a}{$\alpha=1.2$}
\psfrag{b}{$\alpha=1.5$}
\psfrag{c}{$\alpha=1.8$}
\includegraphics[width=11.5cm]{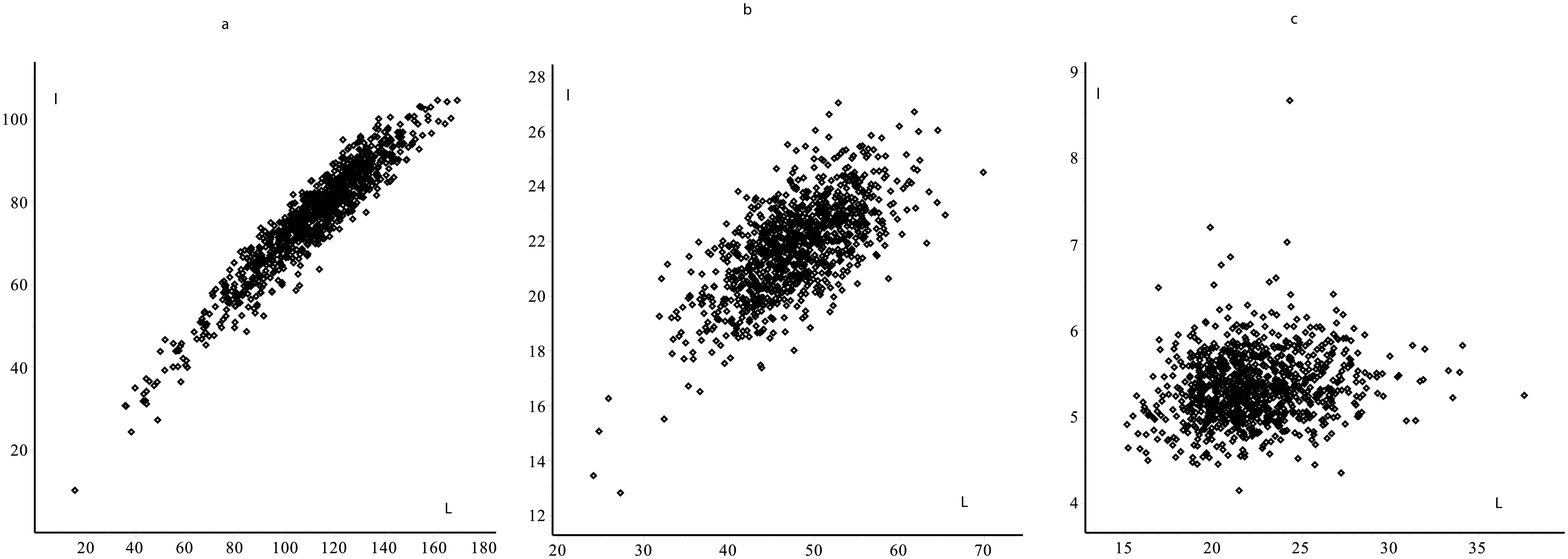}
\captionof{figure}{External versus total length. Each plot is based on 1000 coalescent realisations  with $n=1000$.  } 
\label{Fig1}

\mbox{}\\

As a byproduct, we prove the following result on the decrease of the numbers $X_k$ of branches and $Y_k$ of external branches.

\begin{theorem}
\label{mainresult3}
For $1< \alpha < 2$ as $n \to \infty$
\[ \max_{1 \le j\le \tau_n} \big| \frac{X_{\tau_n-j}}n - \frac j{\tau_n} \big| =o_P(1) \quad \text{and} \quad \max_{1\le j \le \tau_n} \big| \frac{Y_{\tau_n-j}}n - \Big(\frac j{\tau_n}\Big)^\alpha \big| =o_P(1) \ . \]
\end{theorem}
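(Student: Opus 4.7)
The plan is to establish uniform laws of large numbers for the block counts $(X_k)$ and for the external-branch counts $(Y_k)$, and then translate via $k=\tau_n-j$. The guiding heuristic is that each merger decreases the block count by $1/(\alpha-1)$ in expectation, so $\tau_n\approx (\alpha-1)n$ and $X_k\approx n-k/(\alpha-1)$. For the external branches, exchangeability of leaves yields $\mathbf E[Y_k\mid (X_\cdot)]\approx n(X_k/n)^\alpha$, which together with the first LLN gives the second claim via the Lipschitz continuity of $x\mapsto x^\alpha$ on $[0,1]$.

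For the first step, let $K(b)$ denote the merger multiplicity when $b$ blocks are present. A direct integration against $\nu(dp)=p^{-2}\Lambda(dp)$ via the substitution $p=x/b$ gives, as $b\to\infty$,
\[ g(b):=\mathbf E[K(b)-1]\to\frac{1}{\alpha-1}, \qquad \mathrm{Var}(K(b))=O(b^{2-\alpha}). \]
Writing the Doob decomposition $X_k=n-\sum_{i<k}g(X_i)+M_k$ with $M_k$ a centred martingale, the bracket $\langle M\rangle_{\tau_n}=O(\tau_n n^{2-\alpha})=O(n^{3-\alpha})$ combined with Doob's $L^2$ inequality yields $\max_k|M_k|=O_P(n^{(3-\alpha)/2})=o_P(n)$, since $\alpha>1$. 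The drift correction $\sum(g(X_i)-1/(\alpha-1))$ is handled by splitting at the threshold $X_i\geq n^{1/2}$: the large-block regime contributes $o(n)$ because $g(b)-1/(\alpha-1)\to 0$, and the small-block regime has only $O(n^{1/2})$ terms (each merger decreases $X$ by at least one). This yields $\max_{k\leq\tau_n}|X_k-(n-k/(\alpha-1))|=o_P(n)$; setting $k=\tau_n$ and using $X_{\tau_n}=1$ gives $\tau_n=(\alpha-1)n(1+o_P(1))$, and setting $k=\tau_n-j$ followed by division by $n$ gives the first claim.

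For the second step, condition on $\mathcal G=\sigma(X_k,K_k: k\geq 0)$. By the paintbox construction of a $\Lambda$-coalescent, the blocks merged at step $j$ form a uniformly random subset of size $K_j:=X_{j-1}-X_j+1$ among the $X_{j-1}$ current blocks. Writing $\sigma_1$ for the first event in which leaf $1$ participates, this gives
\[ \mathbf P(\sigma_1>k\mid \mathcal G)=\prod_{j=1}^k\frac{X_j-1}{X_{j-1}}=:\rho_k, \]
so $\mathbf E[Y_k\mid\mathcal G]=n\rho_k$ by exchangeability. The identity $\log\rho_k=\log(X_k/n)+\sum_{j\leq k}\log(1-1/X_j)$ together with $\sum_{j\leq k}1/X_j=(\alpha-1)\log(n/X_k)+o(1)$ (from the LLN for $X_k$) gives $\rho_k=(X_k/n)^\alpha(1+o_P(1))$ uniformly on $\{X_k\geq\delta n\}$. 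Setting $D_k:=Y_k-n\rho_k$, one checks that $(D_k/\rho_k)$ is a $\mathcal G$-martingale whose increment variance is bounded by the hypergeometric variance of $Y_{j-1}-Y_j$; Doob's inequality then yields $\max_k|D_k|=o_P(n)$ on $\{X_k\geq\delta n\}$. On the complementary range the trivial bounds $Y_k\leq X_k<\delta n$ and $n(X_k/n)^\alpha\leq\delta^\alpha n$ give $|D_k|\leq(\delta+\delta^\alpha)n$; letting $\delta\to 0$ completes the proof of the second claim.

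The main obstacle will be the uniform control of $D_k/\rho_k$ near the root of the tree, where $\rho_k\to 0$ inflates the Doob-inequality bound; the splitting at $\{X_k\geq\delta n\}$ together with the trivial bound $Y_k\leq X_k$ on the complement, followed by a diagonal argument in $\delta\to 0$, is the cleanest way to resolve this.
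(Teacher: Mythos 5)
Your argument is correct, but it takes a genuinely different route from the paper's. The paper gets the first claim essentially for free from its Lemma \ref{uniformEstimate}, which was built for the main theorem via a coupling of $(X_{\tau_n-j})$ with a stationary point process imported from \cite{ke}; that lemma delivers the much sharper error $O_P(j^{1/\alpha+\varepsilon})$, of which Theorem \ref{mainresult3} needs only the crude $o_P(n)$ consequence. You instead prove the law of large numbers from scratch by a Doob decomposition of $X_k$, using $\mathbf E[K(b)-1]\to 1/(\alpha-1)$ and $\mathrm{Var}(K(b))=O(b^{2-\alpha})$ so that the martingale part is $O_P(n^{(3-\alpha)/2})=o_P(n)$; this is more elementary and self-contained (it does not even need the stable limit law of Lemma \ref{blockcounting}, only $\tau_n\sim(\alpha-1)n$, which you rederive), at the price of an estimate too weak to feed into Theorem \ref{mainresult}. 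For the second claim both proofs rest on the same two pillars — the identity $\mathbf E[Y_k\mid X]=n\rho_k=X_k\Pi_0^k$ from hypergeometric sampling, and the fact that $Y_k/\rho_k$ is a conditional martingale so that $D_k=Y_k-n\rho_k$ has controllable second moment — but the paper evaluates $\mathbf E[G_k^2\mid X]$ directly and uniformly in $k$ using the sharp asymptotics of Lemma \ref{Pi}, whereas you localize to $\{X_k\ge\delta n\}$, where $\rho_k$ is bounded below by a constant times $\delta^\alpha$, apply Doob there, and dispose of the complementary range by the trivial bounds $Y_k\le X_k<\delta n$ and $(j/\tau_n)^\alpha=O(\delta^\alpha)$ before letting $\delta\to0$. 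Your localization correctly anticipates and resolves the only real danger point (the blow-up of $1/\rho_k$ near the root). Two small items you should still write out: the uniform boundedness of $g(b)=\mathbf E[K(b)-1]$ over all $b\ge2$, which your small-block estimate tacitly uses, and the actual computation behind $\mathrm{Var}(K(b))=O(b^{2-\alpha})$ (both are standard for Beta$(2-\alpha,\alpha)$ and true, cf.\ \cite{de,gn}).
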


\noindent
This is illustrated in Figure \ref{Fig2}.\\\\

\psfrag{a}{$n=100$}
\psfrag{b}{$n=1000$}
\psfrag{c}{$n=10000$}
\psfrag{x}{$X$}
\psfrag{y}{$Y$}
\includegraphics[width=11.5cm]{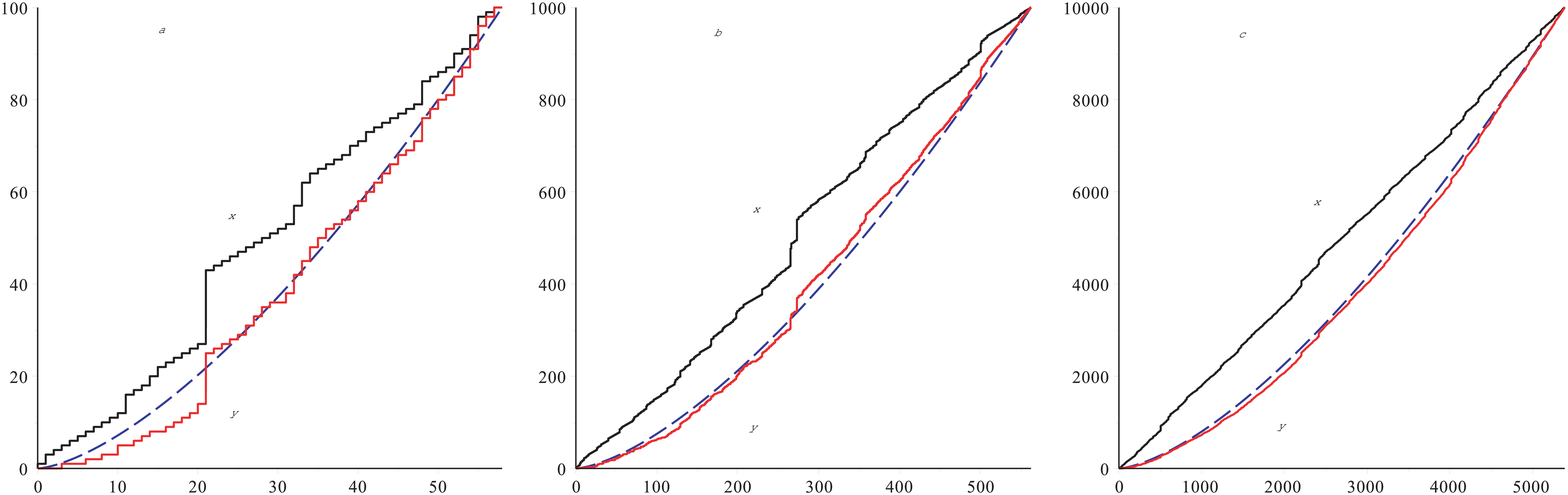}
\captionof{figure}{Simulation of $X=(X_{\tau_n-j})$ and $Y=(Y_{\tau_n-j})$ for $\alpha=1.5$ and $n=100$, 1000 and 10000. The dashed line shows the curve $n(j/\tau_n)^\alpha$. }
\label{Fig2}

\section{Heuristics and outline}
The proof of Theorem \ref{mainresult} is given in the next section. It reveals an unexpected link between $\ell_n$ and the total number of mergers $\tau_n$. In this section we explain the heuristics; this may also serve as a guide through the next section.

For $m \ge 2$, the rate at which a coalescence happens within $m$ lineages is denoted by $\lambda_m$. We thus have
\begin{align}\label{totl}
L_n \stackrel d= \sum_{k=0}^{\tau_n-1}  X_k \, W_k/\lambda_{X_k}
\end{align}
and 
\begin{align} \label{extl} \ell_n \stackrel d= \sum_{k=0}^{\tau_n-1}  Y_k \,W_k/\lambda_{X_k}
\end{align}
where $W_0, W_1, \ldots$ are i.i.d. standard exponential distributed random variables.
From Lemma 2.2 in \cite{de} we have for $m \to \infty$
\begin{align}\label{rates}
\lambda_m= \frac 1{\alpha \Gamma (\alpha)} m^\alpha + O(m^{\alpha-1}) \ . 
\end{align}
Combining \eqref{totl} and \eqref{rates} suggests, informally written,
$$L_n \approx \, \alpha\Gamma(\alpha)\sum_{k=0}^{\tau_n-1} {X_k}^{1-\alpha}.$$ 
On the other hand, a coupling of $(X_k)$ with a renewal process shows
\begin{align}\label{renas}
X_k \approx  \gamma \cdot (\tau_n-k),
\end{align}
where $\gamma = \frac 1{\alpha -1}$ 
(see Lemma \ref{uniformEstimate} for the exact statement). This might suggest that 
\begin{align}\label{asLn}
L_n \approx \alpha\Gamma(\alpha) \frac{\gamma^{1-\alpha}}{2-\alpha} \tau_n^{2-\alpha}.
\end{align}
Before discussing the (in-)validity of this asymptotics, let us turn to the external length. Since, given $X=(X_k)$, the random variables $Y_k$ result from a hypergeometric sampling (see equation \eqref{eq3} below), we obtain (see \eqref{YX})
$$\mathbf E[Y_k\mid X] = X_k \prod_{i=1}^k (1-1/X_i)$$
Replacing $Y_k$ in \eqref{extl} by this conditional expectation and again using \eqref{rates}, we are led to 
\begin{align}\label{asln1}
\ell_n \approx  \alpha\Gamma(\alpha)\sum_{k=0}^{\tau_n-1} {X_k^{1-\alpha}}   {\prod_{i=1}^k (1-1/X_i)}.
\end{align}
Plugging in \eqref{renas}, using the asymptotics $\prod_{i=1}^k \left(1-\frac{1/\gamma}{\tau_n-i}\right)\sim \,(\tau_n-k)^{-1/\gamma}$ and the cancellation of $(\tau_n-k)^{1/\gamma}$ against $(\tau_n-k)^{1-\alpha}$, we arrive at
\begin{align}\label{asln2}
\ell_n \approx  \alpha\Gamma(\alpha){\gamma^{1-\alpha}} \tau_n^{2-\alpha}
\end{align}
(see Lemma \ref{asymexp} for a precise statement).
The asymptotics of $\tau_n$ is (see Lemma \ref{blockcounting}) 
$$\tau_n \approx \frac n\gamma + n^{1/\alpha} {\rm const} \ \varsigma.$$
Using a Taylor expansion one hence obtains (see \eqref{Knhoch} for the exact statement) 
\begin{align*}\label{Knho}
\tau_n^{2-\alpha} \approx \left(\frac n\gamma\right)^{2-\alpha} + n^{1-\alpha + 1/\alpha} {\rm const'} \ \varsigma.
\end{align*}
This shows that (as $n \to \infty$) the fluctuations of $\tau_n^{2-\alpha}$ decrease with $n$ for $\alpha > \alpha_0$ and increase for $\alpha < \alpha_0$. Since (as discussed above) the fluctuations of $L_n$ are always at least of order one, the approximation  \eqref{asLn} cannot capture the fluctuations. In contrast, the approximation \eqref{asln2} turns out to be sufficiently fine also on the level of fluctuations.  The reason for this is the asymptotic flatness of the function $ (x_k) \mapsto  \sum_{k=0}^{\tau-1} {x_k^{1-\alpha}}   {\prod_{i=1}^k (1-1/x_i)}$ in the point $(x_k) = (\gamma(\tau-k))$, as revealed in the proof of Lemma \ref{Pi}.

\section{Proofs}

Theorem \ref{mainresult} is a consequence of the following two lemmas.

The first lemma on the number of mergers has been independently obtained by Delmas et al \cite{de}, Gnedin and Yakubovich \cite{gn} and Iksanov and M\"ohle \cite{ikmo2}. (Note that they use a different normalization of the limit law.) We put
\[ \gamma = \frac 1{\alpha - 1} \ . \]
\begin{lemma} \label{blockcounting}
Let $1<\alpha < 2$. Then for $n \to \infty$
\[ \frac{\tau_n- \frac n\gamma}{n^{1/\alpha}} \ \stackrel d\to\ \frac\varsigma{\gamma^{\frac1\alpha+1} \Gamma(2-\alpha)^{\frac 1\alpha}} \ . \]
\end{lemma}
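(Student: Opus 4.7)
The strategy is to recognize $\tau_n$ as a first-passage time and apply the stable central limit theorem. Writing $D_j:=X_{j-1}-X_j$ for the number of lineages absorbed at the $j$-th merger, one has $n-X_k=\sum_{j=1}^{k} D_j$ and hence
\[
\tau_n=\inf\bigl\{k\ge 1:D_1+\cdots+D_k\ge n-1\bigr\}.
\]
The plan is to couple the dependent sequence $(D_j)$ with an i.i.d.\ sequence $(\xi_j)$ of a suitable limiting decrement law, apply the stable CLT to the walk $S_k=\xi_1+\cdots+\xi_k$, and transfer the result to $\tau_n$ via the first-passage duality between $S$ and $T_n:=\inf\{k:S_k\ge n\}$.

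The first step is to identify the limit law of $D_j$ given $X_{j-1}=m$ as $m\to\infty$. For the Beta$(2-\alpha,\alpha)$-coalescent one has $\lambda_{m,j}=\binom{m}{j}\,B(j-\alpha,m-j+\alpha)/B(2-\alpha,\alpha)$ in closed form, and a direct Stirling computation yields
\[
P(K_m=j)\ \xrightarrow[m\to\infty]{}\ \frac{\alpha\,\Gamma(j-\alpha)}{j!\,\Gamma(2-\alpha)}=:p_j,\qquad j\ge 2,
\]
with probability generating function
\[
\phi(z)=\sum_{j\ge 2}p_j\,z^j=\frac{(1-z)^\alpha-1+\alpha z}{\alpha-1}.
\]
Setting $\xi:=K_\infty-1$, one reads off $\mathbf E[\xi]=\phi'(1)-1=\alpha\gamma-1=\gamma$, and either a Stirling estimate on $p_j$ or the $u^\alpha$-term in $\phi(1-u)=1-\alpha\gamma u+\gamma u^\alpha+o(u^\alpha)$ gives the heavy right tail
\[
\mathbf P(\xi>k)\ \sim\ \frac{k^{-\alpha}}{\Gamma(2-\alpha)}.
\]

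The second, and main, step is the coupling. Since the total-variation distance between the law of $D_j$ given $X_{j-1}=m$ and the law of $\xi$ vanishes as $m\to\infty$, one can realise $(D_j)$ and an i.i.d.\ sequence $(\xi_j)$ on a common probability space so that $|\tau_n-T_n|=o_P(n^{1/\alpha})$. The technical obstacle is the end of the coalescent, when $X_{j-1}$ is of constant order and the approximation fails; there, however, the remaining number of mergers is deterministically bounded in that order, producing only an $O_P(1)$ contribution, negligible on the scale $n^{1/\alpha}$.

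Finally, the classical stable CLT applied to $S_k$ with mean $\gamma$ and right tail $k^{-\alpha}/\Gamma(2-\alpha)$ yields
\[
\frac{S_n-n\gamma}{n^{1/\alpha}}\ \stackrel d\to\ -\frac{\varsigma}{\Gamma(2-\alpha)^{1/\alpha}},
\]
the minus sign reflecting that $\xi$ has a heavy right but negligible left tail. Writing $T_n=n/\gamma+n^{1/\alpha}B$ and using $S_{T_n}\approx n$ together with $T_n^{1/\alpha}\sim(n/\gamma)^{1/\alpha}$ gives the inversion $B=-A/\gamma^{1+1/\alpha}$, where $A$ denotes the stable limit above. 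Substituting produces
\[
\frac{T_n-n/\gamma}{n^{1/\alpha}}\ \stackrel d\to\ \frac{\varsigma}{\gamma^{1/\alpha+1}\,\Gamma(2-\alpha)^{1/\alpha}},
\]
which by the coupling transfers to $\tau_n$ and is the claim.
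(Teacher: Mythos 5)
Your proposal follows essentially the same route as the paper, which itself only sketches this lemma and defers the details to Delmas et al., Gnedin--Yakubovich and Iksanov--M\"ohle: replace the decrements $X_{j-1}-X_j$ asymptotically by i.i.d.\ copies of a limit law with mean $\gamma$ and tail $\mathbf P(\xi>k)\sim k^{-\alpha}/\Gamma(2-\alpha)$, apply the stable CLT, and invert at the (random) passage time. Your explicit identification of the limiting merger-size law $p_j=\alpha\Gamma(j-\alpha)/(j!\,\Gamma(2-\alpha))$, its generating function $\phi$, the mean $\gamma$ and the tail constant are all correct, as is the sign bookkeeping that converts the spectrally positive limit of $S_n-n\gamma$ into the left-skewed $\varsigma$ and produces the factor $\gamma^{-1/\alpha-1}$. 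The one step stated more strongly than it is justified is the coupling: pointwise vanishing of the total-variation distance between the law of $D_j$ given $X_{j-1}=m$ and the law of $\xi$ does not by itself give $|\tau_n-T_n|=o_P(n^{1/\alpha})$, since the coupling must be sustained over $\Theta(n)$ steps and the one-step errors accumulate; a quantitative rate in $m$ (and a separate treatment of the final, stochastically bounded, stretch of the chain) is needed, which is precisely the content of the ``random walk with a barrier'' analysis in the cited references and of the point-process coupling of Lemma 6 in \cite{ke} used elsewhere in this paper.
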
 
\noindent
with $\varsigma$ described in Section \ref{Intro}. In short the proof of this lemma goes along the following lines: The negative increments $X_{k-1}-X_{k}$ can  be asymptotically replaced by i.i.d. positive random variables $V_k$, $k \ge 1$, with expectation $\gamma$ and tail behavior $\mathbf P(V_k \ge v) \sim \frac 1{\Gamma(2-\alpha)} v^{-\alpha}$. From the theory of stable laws for $k \to \infty$ 
\[ \frac{X_0-X_k-\gamma k}{k^{1/\alpha}}\sim \frac{V_1+ \cdots + V_k-  \gamma k}{k^{1/\alpha} }\stackrel d \to \frac{- \varsigma}{\Gamma(2-\alpha)^{1/\alpha}} \ . \]
The choice $k=\tau_n$ gives 
\[ \frac{n-1- \gamma \tau_n}{\tau_n^{1/\alpha}} \stackrel d\to \frac{- \varsigma}{\Gamma(2-\alpha)^{1/\alpha}} \]
and thus $\tau_n\sim n/\gamma$. This entails the lemma. For details we refer to \cite{de,gn,ikmo2}.

The asymptotic expansion in the next lemma discloses the link between $\ell_n$ and $\tau_n$.

\begin{lemma} \label{asymexp}
Let $1<\alpha<2$ and $\varepsilon >0$. Then for $n \to \infty$
\[ \ell_n = \alpha\Gamma(\alpha) (\alpha-1)^{\alpha-1}\tau_n^{2-\alpha} + O_P\big( n^{\frac 2\alpha-\alpha + \varepsilon}+n^{\frac 32 - \alpha+ \varepsilon}\big) \ .\]
\end{lemma}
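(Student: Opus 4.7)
I would start from the representation $\ell_n \stackrel{d}{=} \sum_{k=0}^{\tau_n-1} Y_k W_k / \lambda_{X_k}$ in \eqref{extl} and peel off, in four stages, the sources of randomness entering through $\lambda_{X_k}$, $W_k$, $Y_k$, and finally $X_k$, tracking the error accumulated at each stage. The target $\alpha\Gamma(\alpha)(\alpha-1)^{\alpha-1}\tau_n^{2-\alpha}$ appears as the deterministic value of the telescoped sum evaluated at the linear reference profile $x_k = \gamma(\tau_n-k)$.

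Stage 1 uses \eqref{rates} to replace $1/\lambda_{X_k}$ by $\alpha\Gamma(\alpha)X_k^{-\alpha}$; because $Y_k\le X_k$ and $X_k\asymp \tau_n - k$ by Lemma \ref{uniformEstimate}, the discarded remainder is $O_P(\sum_k X_k^{-\alpha}) = O_P(n^{1-\alpha})$, which is dominated by both claimed errors. Stage 2 writes $W_k = 1 + U_k$ with centered, unit-variance $U_k$ and treats $\alpha\Gamma(\alpha)\sum_k Y_k U_k X_k^{-\alpha}$ as a sum of conditionally independent centered terms given $(X, Y)$. Its conditional variance is $(\alpha\Gamma(\alpha))^2 \sum_k Y_k^2 X_k^{-2\alpha}$; substituting the typical magnitudes $Y_k \approx X_k ((\tau_n-k)/\tau_n)^{\alpha-1}$ and $X_k\approx \gamma(\tau_n-k)$, this sum is of order $\tau_n^{3-2\alpha}$, so Stage 2 produces precisely the $O_P(n^{3/2-\alpha+\varepsilon})$ contribution, with the $\varepsilon$ absorbing logarithmic slack and edge effects near the root where the $X_k$ are small.

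Stage 3 replaces $Y_k$ by $\mathbf{E}[Y_k\mid X] = X_k\prod_{i=1}^k(1-1/X_i)$. The hypergeometric-style conditional variance bound $\mathrm{Var}(Y_k\mid X)\le X_k$, combined with a martingale decomposition across $k$, gives a contribution of order $(\sum_k X_k^{1-2\alpha})^{1/2} = O(1)$ since $\alpha>1$, which is negligible. What remains is the $X$-measurable sum $\alpha\Gamma(\alpha)\sum_{k=0}^{\tau_n-1} X_k^{1-\alpha}\prod_{i=1}^k(1-1/X_i)$. In Stage 4 I would invoke Lemma \ref{uniformEstimate} to replace $X_k$ by $\gamma(\tau_n-k)$ and then use the product asymptotics $\prod_{j=\tau_n-k}^{\tau_n-1}(1-(\alpha-1)/j)\sim ((\tau_n-k)/\tau_n)^{\alpha-1}$: the factors $(\tau_n-k)^{1-\alpha}$ and $(\tau_n-k)^{\alpha-1}$ cancel, the sum telescopes to $\gamma^{1-\alpha}\tau_n^{2-\alpha} = (\alpha-1)^{\alpha-1}\tau_n^{2-\alpha}$, and multiplication by $\alpha\Gamma(\alpha)$ yields the claimed leading term.

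The main obstacle is the replacement of $X_k$ by $\gamma(\tau_n-k)$ inside the functional $\Phi(x) = \sum_k x_k^{1-\alpha}\prod_{i=1}^k(1-1/x_i)$. A naive Lipschitz bound is far too crude: the stable-law deviations $X_k-\gamma(\tau_n-k)$ are of order $n^{1/\alpha}$ uniformly, and if the derivatives of $\Phi$ were summed at full strength the resulting error would vastly exceed $n^{2/\alpha-\alpha+\varepsilon}$. The key must be an \emph{asymptotic flatness} of $\Phi$ at the linear reference profile: a cancellation between the partial derivative coming from the $x_k^{1-\alpha}$ factor and those of the product factors $(1-1/x_j)$ for $j\le k$, so that the effective response to an $O(n^{1/\alpha})$-perturbation is only of order $(n^{1/\alpha})^2 \cdot n^{-\alpha} = n^{2/\alpha-\alpha}$. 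Quantifying this cancellation uniformly in $n$, particularly in the edge region near the root where $X_k$ is small and the hypergeometric product is most delicate, is the technical heart of the argument.
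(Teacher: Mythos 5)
Your four-stage plan follows the same architecture as the paper's proof (which successively compares $\ell_n$ with $\mathfrak l_n''=\sum_k Y_kW_kX_k^{-\alpha}$, $\mathfrak l_n'=\sum_k Y_kX_k^{-\alpha}$ and $\mathfrak l_n=\sum_k X_k^{1-\alpha}\Pi_0^k$), and you have correctly located the decisive phenomenon: the first-order flatness of $(x_k)\mapsto\sum_k x_k^{1-\alpha}\prod_{i\le k}(1-1/x_i)$ at the linear profile. But two of your error estimates do not hold as stated. In Stage 1, from $Y_k\le X_k$ and $X_k\asymp\tau_n-k$ you get $\sum_kX_k^{-\alpha}\asymp\sum_{j\ge1}j^{-\alpha}$, which is a convergent series, hence $O_P(1)$ but not $o_P(1)$ --- not the claimed $O_P(n^{1-\alpha})$ --- and an $O_P(1)$ error is \emph{not} absorbed by $n^{2/\alpha-\alpha+\varepsilon}+n^{3/2-\alpha+\varepsilon}$ once $\alpha>\sqrt2$. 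You must retain the factor $\Pi_0^k\approx((\tau_n-k)/\tau_n)^{\alpha-1}$ coming from $\mathbf E[Y_k\mid X]=X_k\Pi_0^k$ (which you do use in Stage 2); then the correction is $O_P(\tau_n^{1-\alpha}\sum_k(\tau_n-k)^{-1})=O_P(n^{1-\alpha}\log n)$, which is admissible.

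The more serious gap is Stage 3. The deviations $G_k=Y_k-\mathbf E[Y_k\mid X]$ are strongly positively correlated across $k$ (they accumulate the same sampling errors), so $\mathrm{Var}\big(\sum_kG_kX_k^{-\alpha}\mid X\big)$ is not $\sum_kX_k^{-2\alpha}\mathrm{Var}(Y_k\mid X)$, and the bound $(\sum_kX_k^{1-2\alpha})^{1/2}=O(1)$ does not follow. The martingale structure lives in the hypergeometric increments $\tilde H_i=H_i-U_iY_{i-1}/X_{i-1}$, not in the index $k$: one has $G_k=-X_k\Pi_0^k\sum_{i\le k}\tilde H_i/(X_i\Pi_0^i)$, and after exchanging the order of summation, $\sum_kG_kX_k^{-\alpha}=\sum_i\frac{\tilde H_i}{X_i}\sum_{k\ge i}\Pi_i^kX_k^{1-\alpha}$ with the $\tilde H_i$ orthogonal given $X$ and $\mathbf E[\tilde H_i^2\mid X,Y_0,\dots,Y_{i-1}]\le U_iY_{i-1}/X_{i-1}$. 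Carrying this out yields a conditional second moment of order $n^{3-2\alpha}$, i.e.\ Stage 3 contributes $O_P(n^{3/2-\alpha+\varepsilon})$, the same order as Stage 2; it is one of the two error terms in the lemma, not a negligible one, and your claimed $O(1)$ is both underivable and, for $\alpha>3/2$, too large for the asserted conclusion. Finally, Stage 4 is correctly diagnosed but left unproven: the paper establishes the cancellation by expanding $\Pi_0^k=\big(\tfrac{\tau_n-k}{\tau_n}\big)^{\alpha-1}\big(1+\sum_{i\le k}\tfrac{X_i-\gamma(\tau_n-i)}{\gamma^2(\tau_n-i)^2}+O_P((\tau_n-k)^{2/\alpha-2+\varepsilon})\big)$ and $X_k^{1-\alpha}$ to first order and using the exact identity $\sum_{k<\tau_n}\sum_{i\le k}\tfrac{X_i-\gamma(\tau_n-i)}{\gamma^2(\tau_n-i)^2}=\sum_{i<\tau_n}\tfrac{X_i-\gamma(\tau_n-i)}{\gamma^2(\tau_n-i)}$, which cancels the linear response summand by summand and leaves the $O_P(n^{2/\alpha-\alpha+\varepsilon})$ remainder; without this (or an equivalent) computation the leading term is not established.
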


\noindent
Before proving this key lemma we show that Theorem \ref{mainresult} is a direct consequence of the two preceding lemmas:

\begin{proof}[Proof of Theorem \ref{mainresult}]
From Lemma \ref{blockcounting} and a Taylor expansion we obtain
\begin{align} \label{Knhoch}
\tau_n^{2-\alpha} = \Big( \frac n \gamma\Big)^{2-\alpha} + (2-\alpha) \Big( \frac n \gamma\Big)^{1-\alpha}\Big(\tau_n-\frac n \gamma\Big)  + O_P(n^{-\alpha + \frac 2\alpha}) \ . 
 \end{align}
Using $1< \alpha < 2$ and Lemma \ref{asymexp}  
\[ \ell_n = \alpha(\alpha-1)\Gamma(\alpha) n^{2-\alpha} + \alpha(2-\alpha) \Gamma(\alpha)n^{1-\alpha}\Big(\tau_n-\frac n\gamma\Big) + O_P(n^{\frac 1\alpha+1-\alpha-\varepsilon}) \]
for some $\varepsilon >0$. This implies the theorem.
\end{proof}

\noindent
Now we prepare the proof of Lemma \ref{asymexp} by deriving two other lemmas.

\begin{lemma} \label{uniformEstimate}
For any $\varepsilon >0$
\[ \max_{1\le j \le \tau_n}  j^{-\frac 1\alpha- \varepsilon }| X_{\tau_n-j} - \gamma j| = O_P(1) \ , \]
as $n \to \infty$.
\end{lemma}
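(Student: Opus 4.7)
The plan is to combine the coupling strategy underlying Lemma~\ref{blockcounting} with a dyadic maximal inequality. I would first invoke the coupling of the decrements $V_k := X_{k-1} - X_k$ with an i.i.d.\ sequence $V'_1, V'_2, \ldots$ satisfying $\mathbf{E}[V'_i] = \gamma$ and $\mathbf{P}(V'_i \ge v) \sim v^{-\alpha}/\Gamma(2-\alpha)$, in which $V_k = V'_k$ as long as $X_{k-1}$ is large (\cite{de,gn,ikmo2}). Writing $S_k := \sum_{i=1}^k (V'_i - \gamma)$, the identity $X_k = n - \gamma k - S_k$ together with $X_{\tau_n} = 1$ gives
$$X_{\tau_n - j} - \gamma j \;=\; 1 + S_{\tau_n} - S_{\tau_n - j} \;=\; 1 + \sum_{i = \tau_n - j + 1}^{\tau_n}(V'_i - \gamma),$$
up to coupling errors localised to the $O_P(1)$ last merges.

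For the forward walk I would prove the dyadic maximal inequality $\max_{1 \le k \le N} k^{-1/\alpha - \varepsilon}|S_k| = O_P(1)$ for $N \asymp n$. On the block $k \in [2^{m-1}, 2^m)$ the weight is $O(2^{-m(1/\alpha + \varepsilon)})$, while Etemadi's maximal inequality combined with the stable tail bound $\mathbf{P}(|S_{2^m}| > t) \lesssim 2^m t^{-\alpha}$ gives $\mathbf{P}(\max_{k \le 2^m}|S_k| > t \cdot 2^{m/\alpha}) \lesssim t^{-\alpha}$; the extra factor $2^{-\alpha m \varepsilon}$ makes the union bound over $m$ summable.

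The final and most delicate step is to transfer this forward-walk bound to the backward partial sum appearing in the representation, since $\tau_n$ is a forward first-passage time and Etemadi's inequality does not apply directly to the reversed walk. I would exploit the Markov property of $(X_k)$: letting $\sigma_m := \inf\{k : X_k \le m\}$, the strong Markov property identifies $\tau_n - \sigma_m$ in law with $\tau_{X_{\sigma_m}}$, so Lemma~\ref{blockcounting} applied to an $m$-coalescent together with the tightness of the overshoot $m - X_{\sigma_m}$ (standard renewal theory for walks with positive drift and stable-type steps) yields $|\tau_n - \sigma_m - m/\gamma| = O_P(m^{1/\alpha})$. A dyadic union bound in $m$ upgrades this to $|\tau_n - \sigma_m - m/\gamma| \le C m^{1/\alpha + \varepsilon}$ uniformly in $m$; reindexing by $j = \tau_n - \sigma_m$, $m = X_{\tau_n - j}$ and substituting $m \asymp \gamma j$ in the error term then gives the claim. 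The main technical difficulty I expect is to keep the dyadic accumulation of errors tight enough to produce the exponent $1/\alpha + \varepsilon$ rather than a weaker one; the monotonicity of $m \mapsto \sigma_m$ should play a decisive role here.
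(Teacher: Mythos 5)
Your forward maximal inequality and the representation $X_{\tau_n-j}-\gamma j = 1 + S_{\tau_n}-S_{\tau_n-j}$ are fine as far as they go, and you correctly identify the transfer to backward sums from the random time $\tau_n$ as the crux. But that step, as you describe it, has a genuine gap. The pointwise bound $|\tau_n-\sigma_m-m/\gamma|=O_P(m^{1/\alpha})$ at dyadic levels $m=2^r$ cannot be upgraded to a bound valid for all $m$ by monotonicity of $m\mapsto\sigma_m$: within a block $m\in[2^{r-1},2^r]$ the centering $m/\gamma$ varies by $\Theta(2^r)$, which is an order of magnitude larger than the target error $2^{r(1/\alpha+\varepsilon)}$, so sandwiching $\sigma_m$ between $\sigma_{2^{r-1}}$ and $\sigma_{2^r}$ gives nothing at the required scale. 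What you actually need inside each block is a maximal inequality for $m\mapsto \tau_n-\sigma_m - m/\gamma$ (equivalently, by first-passage/walk duality, for the walk restarted at time $\sigma_{2^r}$), and this is precisely the hard content of the lemma --- it is not supplied by the union bound you sketch. Two further points would need repair even once that is fixed: (i) a union bound over dyadic $m$ requires a quantitative tail estimate such as $\mathbf P(|\tau_m-m/\gamma|>t\,m^{1/\alpha})\lesssim t^{-\alpha}$ uniformly in $m$, which does not follow from the distributional limit of Lemma \ref{blockcounting}; (ii) the coupling of the decrements with an i.i.d.\ sequence is not exact with errors ``localised to the $O_P(1)$ last merges'' --- the decrement law from $b$ blocks differs from the limiting law for every finite $b$, the resulting discrepancies accumulate along the whole trajectory, and the displacements they cause have heavy tails of index $\alpha-1$, so they must be tracked block by block rather than dismissed.

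The paper sidesteps the backward-sum problem altogether by a different coupling: $X_{\tau_n-j}$ is matched, for all $j\le \tau_n-\eta_n$ with $\eta_n=O_P(1)$, with the points $S_j$ of the coalescent ``come down from infinity'' --- a single process indexed forward from the absorption end and not depending on $n$ --- which is in turn coupled to a stationary renewal process $R_j$ with i.i.d.\ spacings of mean $\gamma$. The claim then reduces to an almost-sure dyadic maximal bound for $j^{-1/\alpha-\varepsilon}|R_j-\gamma j|$ together with control of the heavy-tailed displacements $D_r$ between the two point processes. If you want to salvage your route, the within-block maximal inequality for the first-passage process is the piece you must add, and you will find that it forces essentially the same dyadic bookkeeping that the point-process coupling delivers more directly.
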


\begin{proof}
First note that due to monotonicity of $(X_k)$ for any $\eta \in \mathbb N$
\begin{align*}
\max_{1\le j \le \tau_n} & j^{-\frac 1\alpha - \varepsilon  }| X_{\tau_n-j} - \gamma j|  &\le \max_{1\le j \le \tau_n-\eta}  j^{-\frac 1\alpha - \varepsilon  }| X_{\tau_n-j} - \gamma j| + X_0- X_ \eta + \gamma \eta \ .
\end{align*}
It is well-known that $X_0- X_ \eta $ is stochastically bounded uniformly in $n$, see e.g. formula (7) in \cite{ke}. Therefore it is sufficient to prove the claim for the quantity
\[   \max_{1\le j \le \tau_n-\eta_n}  j^{-\frac 1\alpha - \varepsilon  }| X_{\tau_n-j} - \gamma j| \ , \]
where $\eta_n$ is any sequence of $\mathbb N$-valued random variables, which is stochastically bounded.

This observation allows us to use a coupling device: Let $2 \le S_1<S_2< \cdots$ denote the points of the coalescent's point process $\mu$ downwards from $\infty$, as introduced in section 3 of \cite{ke}. There it is shown that one can couple $\mu$ and $X$ on a common probability space in such a way that $X_{\tau_n-j}= S_{j}$ for all $j \le \tau_n-\eta_n$, where the random variables $\eta_n $ are stochastically bounded uniformly in $n$, see Lemma 6 in \cite{ke}. Therefore it is sufficient to estimate
\[ \max_{1\le j \le \sigma(n)}  j^{-\frac 1\alpha - \varepsilon  }| S_{j} - \gamma j| \ , \]
with \[\sigma(n) = \max\{ j\ge 0: S_j \le n\} \ , \]
where we put $S_0=1$.

We shall consider
\[ M_r=\max_{\sigma(2^{r-1})< j \le \sigma(2^r)}  j^{-\frac 1\alpha - \varepsilon  }| S_{j} - \gamma j|   \]
for $r\ge 2$. For this purpose we use the coupling of $\mu$ to a stationary point process $\nu$ with points $2\le R_1 < R_2 < \ldots$, as described in section 4 of \cite{ke}. Put 
\[ \rho(n)= \max\{j\ge 0:R_j \le n\} \ , \]
also with the convention $R_0=1$. Since $R_{j+1}-R_j$ are i.i.d. random variables for $j\ge 1$ with mean $\gamma$, we have
\begin{align} \rho(n) \sim \gamma^{-1}n \text{ a.s.} \label{eq1}
\end{align}
Moreover, since $R_j=R_1+V_1+ \cdots + V_j$, where $V_1, V_2 \ldots$ are i.i.d. random variables with distribution given in formula (4) in \cite{ke}, it follows from Lemma 9 in \cite{ke}  that
\begin{align}\label{distR}
\max_{\rho(2^{r-1})\le j \le \rho(2^r)}j^{-\frac 1\alpha- \varepsilon } |R_j - \gamma j| = O(2^{- \varepsilon r/2} ) \text{ a.s.} 
\end{align}
Now equations (19) and (24) in \cite{ke} say that for 
\[ N_r= \sigma(2^r)-\sigma(2^{r-1}) \ , \quad N_r' = \rho(2^r)-\rho(2^{r-1})  \]
with $r \ge 2$ we have
\[ |N_r-N_r'| \le D_r \]
where the random variables 
\[D_r = \max_{0 \le i \le N_r\wedge N_r'} \big|S_{\sigma(2^r)-i} - R_{\rho(2^r)-i} \big|\]
 fulfil
\begin{align*}
 \mathbf P( D_r>t) \le ct^{1-\alpha}, \quad t > 0, 
\end{align*}
for some $c>0$. Hence it follows by the Borel-Cantelli lemma that
\begin{align}
D_r = O(r^{\gamma + \varepsilon}) \text{ a.s. , }
\label{eq2}
\end{align}
 and therefore from
$ |\sigma(2^r)-\rho(2^r)| \le D_r+ \cdots + D_1+1 $ 
\begin{align}
 |\sigma(2^r)-\rho(2^r)| =O(r^{\gamma +1 +\varepsilon}) \text{ a.s.}
 \label{F}
\end{align}
for any $\varepsilon >0$. Equation \eqref{eq1} implies
\begin{align} \sigma(2^r) \sim \gamma^{-1} 2^{r} \text{ a.s.} 
\label{eqA}
\end{align}
for $r \to \infty$. 
By the triangle inequality (and reparametrising by $j= \sigma(2^r)-i)$)
\begin{align*}
\sigma(2^{r-1})^{\frac 1\alpha+\varepsilon} M_r &\le \max_{\sigma(2^{r-1})< j \le \sigma(2^r)} | S_{j} - \gamma j| \\ &\le \max_{0 \le i < N_r} \big|S_{\sigma(2^r)-i} - R_{(\rho(2^r)-i)\vee \rho(2^{r-1})} \big|\\
&\mbox{} \qquad + \max_{\rho(2^{r-1})\le j \le \rho(2^r)} |R_j - \gamma j| \\
&\mbox{} \qquad + \gamma \max_{0 \le i < N_r} \big|(\rho(2^r)-i)\vee \rho(2^{r-1})- (\sigma(2^r)-i) \big| \ .
\end{align*}
In case $N_r'< N_r$ the first maximum on the right-hand side is not attained for $i > N_r'$, because then $|S_{\sigma(2^r)-i} - R_{(\rho(2^r)-i)\vee \rho(2^{r-1})} | =  S_{\sigma(2^r)-i}- R_{\rho(2^{r-1})}$ is decreasing for $i \ge N_r'$. Also the third maximum is attained either for $i=0$ or $i=N_r$. This implies
\begin{align*}
\sigma(2^{r-1})^{\frac 1\alpha+\varepsilon} M_r &\le  \max_{0 \le i \le (N_r-1)\wedge N_r'} \big|S_{\sigma(2^r)-i} - R_{\rho(2^r)-i} \big| \\
&\mbox{} \qquad + \gamma \big| \sigma(2^r)-\rho(2^r) \big|+ \gamma \big| \sigma(2^{r-1})-\rho(2^{r-1}) \big|\\
&\mbox{} \qquad + \max_{\rho(2^{r-1})\le j \le \rho(2^r)} |R_j - \gamma j|
\end{align*}
From \eqref{eq2} and \eqref{eqA} for $\varepsilon$ sufficiently small
\begin{align*}M_r &=  \sigma(2^{r-1})^{-\frac 1\alpha - \varepsilon } \max_{\rho(2^{r-1})\le j \le \rho(2^r)} |R_j - \gamma j|+ O(2^{- \frac r\alpha}  r^{1+\gamma + \varepsilon}) \\
&\le 2  \max_{\rho(2^{r-1})\le j \le \rho(2^r)}j^{-\frac 1\alpha - \varepsilon } |R_j - \gamma j|+ O(2^{-\varepsilon r})
\end{align*}
Using \eqref{distR} we infer
\[ M_r = O(2^{-\varepsilon r/2})  \text{ a.s.} \]
Now for $n \in \mathbb N$, choosing $s\in \mathbb N$ such that $2^{s-1} < n \le 2^s$, since $\sigma(n) \le n$ 
\begin{align*}
\max_{1\le j \le \sigma(n)}  j^{-\frac 1\alpha - \varepsilon  }| S_{j} - \gamma j| \le \sum_{r=1}^s M_r = O\big( \sum_{r=1}^s 2^{-\varepsilon r /2}\big) = O(1) \text{ a.s.} 
\end{align*}
This gives the claim.
\end{proof}
Now observe the following: Not only $X=(X_0, \ldots, X_{\tau_n})$ is a Markov chain, but given $X$ also the sequence $Y=(Y_0, \ldots, Y_{\tau_n})$. More precisely let $U_k$ be the number of particles collapsing to one particle in the $k$-th merging event,
thus
\begin{align}\label{XU}
X_k = X_{k-1} - U_k+1 \ .
\end{align} 
For $k>\tau_n$ we have $U_k=1$, since by definition $1=X_{\tau_n}= X_{\tau_{n+1}}= \cdots$. The  numbers of external branches, $n=Y_0>\cdots> Y_{\tau_n}=0$, follow the recursion
\begin{align}
Y_{k} = Y_{k-1} -H_k 
\label{eq3}
\end{align}
with
\begin{align*}
(H_k \mid X, Y_0, \ldots, Y_{k-1} ) \stackrel d= \text{Hyp}(X_{k-1},Y_{k-1},U_k) \ , 
\end{align*}
where $\text{Hyp}(N,M,\nu)$ denotes the hypergeometric distribution with parameters $N,M,\nu$. (These formulas also hold for $k>\tau_n$.) Thus
\[ \mathbf E[Y_k \mid X, Y_0, \ldots, Y_{k-1}] = Y_{k-1} - U_k \frac {Y_{k-1}}{X_{k-1}}= \frac{X_{k}-1}{X_{k-1}} Y_{k-1} \]
and
\begin{align}
 \frac{\mathbf E[Y_k \mid X]}{X_k}= \frac{X_{k}-1}{X_kX_{k-1}} \mathbf E[Y_{k-1}\mid X] = \frac{\mathbf E[Y_{k-1} \mid X]}{X_{k-1}} \Big(1-\frac{1}{X_{k}}\Big) \ .
 \label{eq4}
 \end{align}
Iterating this formula and observing that $\mathbf E[Y_0 \mid X] =Y_0=X_0$ we obtain
\begin{align}\label{YX}
\mathbf E[Y_k \mid X] = X_k\Pi_0^k \ ,
\end{align}
where
\[ \Pi_j^k : = \prod_{i=j+1}^k \Big( 1- \frac 1{X_i}\Big) \ , \quad 0 \le j \le k \ ,\]
in particular $\Pi_k^k=1$.

Next we study the auxiliary quantity $\Pi_j^k$.
\begin{lemma} \label{Pi}
Let $\varepsilon >0$. Then for $0 \le j \le k < \tau_n$ as $n \to \infty$
\begin{align}\label{Pi1}
 \Pi_j^k = \Big(\frac{\tau_n-k}{\tau_n-j}\Big)^{\frac 1\gamma} \Big(1+ \sum_{i=j+1}^k \frac{X_{i}- \gamma(\tau_n -i)}{\gamma^2(\tau_n-i)^{2}} + O_P \big( (\tau_n-k)^{  \frac 2\alpha-2+\varepsilon}\big) \Big) 
 \end{align}
and
\begin{align}\label{Pi2}
\Pi_j^k = \Big(\frac{\tau_n-k}{\tau_n-j}\Big)^{\frac 1\gamma} \Big(1 + O_P \big( (\tau_n-k)^{  \frac 1\alpha-1+\varepsilon}\big) \Big)\ , 
\end{align}
where the $O_P(\cdot)$ terms hold uniformly for all $0 \le j \le k < \tau_n$.
\end{lemma}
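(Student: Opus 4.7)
The approach is the standard logarithm--Taylor expansion fed with the uniform estimate of Lemma~\ref{uniformEstimate}. Write $R_i := X_i - \gamma(\tau_n-i)$, so that Lemma~\ref{uniformEstimate} delivers a random constant $C_n = O_P(1)$ with $|R_i| \le C_n(\tau_n-i)^{1/\alpha+\varepsilon}$ uniformly in $1 \le i < \tau_n$. I first reduce to the regime $\tau_n-k\to\infty$: when $\tau_n-k$ stays bounded, both error terms $(\tau_n-k)^{2/\alpha-2+\varepsilon}$ and $(\tau_n-k)^{1/\alpha-1+\varepsilon}$ are $\Omega(1)$, while $\Pi_j^k\in[0,1]$ and $((\tau_n-k)/(\tau_n-j))^{1/\gamma}\le 1$, so both claims are trivial in that regime.

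Assuming $\tau_n - i \ge \tau_n - k$ is large throughout the sum, I take $\log \Pi_j^k = \sum_{i=j+1}^k \log(1-1/X_i)$ and expand $\log(1-x) = -x - x^2/2 + O(x^3)$ to get
\[
\log \Pi_j^k = -\sum_{i=j+1}^k \frac{1}{X_i} - \frac{1}{2}\sum_{i=j+1}^k \frac{1}{X_i^2} + O\Big(\sum_{i=j+1}^k \frac{1}{X_i^3}\Big).
\]
I then substitute $1/X_i = (\gamma(\tau_n-i))^{-1}(1 - R_i/(\gamma(\tau_n-i)) + O(R_i^2/(\tau_n-i)^2))$ and use the elementary identity $\sum_{i=j+1}^k 1/(\tau_n-i) = \log((\tau_n-j)/(\tau_n-k)) + O(1/(\tau_n-k))$. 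The leading piece contributes $-\gamma^{-1}\log((\tau_n-j)/(\tau_n-k))$, the linear correction produces $A := \sum_{i=j+1}^k R_i/(\gamma^2(\tau_n-i)^2)$, and every remainder
\[
\sum_{i=j+1}^k \frac{R_i^2}{(\tau_n-i)^3}, \qquad \sum_{i=j+1}^k \frac{1}{X_i^2}, \qquad \sum_{i=j+1}^k \frac{1}{X_i^3}
\]
is absorbed into $O_P((\tau_n-k)^{2/\alpha-2+\varepsilon})$: the first because $2/\alpha-3+2\varepsilon<-1$ for small $\varepsilon$ (using $\alpha>1$) makes the series convergent with left-endpoint order $(\tau_n-k)^{2/\alpha-2+2\varepsilon}$; the last two because $2/\alpha-2+\varepsilon>-1$ for $\alpha<2$, so the harmonic-type bound $O(1/(\tau_n-k))$ is dominated.

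Assembling these pieces,
\[
\log \Pi_j^k = -\frac{1}{\gamma}\log\frac{\tau_n-j}{\tau_n-k} + A + O_P\big((\tau_n-k)^{2/\alpha-2+\varepsilon}\big),
\]
with $|A| = O_P((\tau_n-k)^{1/\alpha-1+\varepsilon})$ by a direct term-by-term bound using $|R_i|\le C_n(\tau_n-i)^{1/\alpha+\varepsilon}$ and the summability of $(\tau_n-i)^{1/\alpha-2+\varepsilon}$. Exponentiating and applying $e^x = 1 + x + O(x^2)$ to the small quantity $A + O_P((\tau_n-k)^{2/\alpha-2+\varepsilon})$, and noting that $A^2 = O_P((\tau_n-k)^{2/\alpha-2+2\varepsilon})$ is of the same order as the declared error, yields \eqref{Pi1}. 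The cruder bound \eqref{Pi2} follows by simply absorbing $A$ into the single error term $O_P((\tau_n-k)^{1/\alpha-1+\varepsilon})$, which is larger than the error in \eqref{Pi1} since $1/\alpha - 1 > 2/\alpha - 2$.

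The main obstacle I anticipate is enforcing the uniformity over all $0 \le j \le k < \tau_n$ simultaneously: a single random constant $C_n$ must control every $R_i$ at once, which is exactly what the supremum formulation of Lemma~\ref{uniformEstimate} provides, making a union bound over $i$ unnecessary. The remaining bookkeeping reduces to the two algebraic inequalities $2/\alpha-3<-1$ and $2/\alpha-2>-1$, both automatic from $1<\alpha<2$ (with a harmless shift of $\varepsilon$ at the outset).
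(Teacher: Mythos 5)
Your argument follows the same route as the paper's proof: Taylor-expand $\log(1-1/X_i)$, substitute the expansion of $1/X_i$ around $1/(\gamma(\tau_n-i))$ furnished by Lemma~\ref{uniformEstimate}, telescope the harmonic sum against $\gamma^{-1}\log\frac{\tau_n-j}{\tau_n-k}$, and exponentiate; your bookkeeping of the three remainder sums and the bound on $A$ agree with the paper's, and the exponent inequalities you invoke are all correct.

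One caveat: your dismissal of the regime where $\tau_n-k$ stays bounded is not justified as stated. The errors in \eqref{Pi1} and \eqref{Pi2} are multiplicative against the prefactor $\big(\tfrac{\tau_n-k}{\tau_n-j}\big)^{1/\gamma}$, which can be as small as $\tau_n^{-1/\gamma}$ (take $j=0$, $k=\tau_n-1$); knowing only $\Pi_j^k\le 1$ and that the prefactor is $\le 1$ does not give the required $\Pi_j^k=O_P\big(((\tau_n-k)/(\tau_n-j))^{1/\gamma}\big)$. The claim is nevertheless true: the one-sided estimate $\log\Pi_j^k\le-\sum_{i=j+1}^k 1/X_i\le-\gamma^{-1}\log\frac{\tau_n-j}{\tau_n-k}+O_P(1)$, uniform in $j\le k$ (using $|R_i|\le C_n(\tau_n-i)^{1/\alpha+\varepsilon}$ together with the deterministic bound $X_i\ge\tau_n-i+1$), yields $\Pi_j^k=O_P(1)\cdot\big(\tfrac{\tau_n-k}{\tau_n-j}\big)^{1/\gamma}$, after which the bounded regime really is trivial. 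The paper sidesteps the case split entirely by keeping the expansion in exponential form, $\Pi_j^k=\big(\tfrac{\tau_n-k}{\tau_n-j}\big)^{1/\gamma}\exp(A+O_P(\cdots))$, until the very end; you may want to do the same or insert the one-line bound above.
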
 

\begin{proof}
Using a Taylor expansion of $\log(1+h)$ we obtain that on the event $\{k< \tau_n\}$
\begin{align*}
\Pi_j^k= \exp\Big( -\sum_{i=j+1}^k \frac 1{X_i}  + O\big( \sum_{i=j+1}^k \frac 1{X_i^2} \big) \Big) 
\end{align*}
with the error term uniformly bounded in $n$ and $0 \le j \le k < \tau_n$. 
In view of a Taylor expansion together with Lemma \ref{uniformEstimate} and the fact that $\sum_{m=r}^{s-1} \frac 1m = \log \frac sr + O(\frac 1r)$ we obtain
\begin{align*}\sum_{i=j+1}^k \frac 1{X_i}  &= \sum_{i=j+1}^k \Big( \frac 1{\gamma (\tau_n-i)}- \frac{X_{i}- \gamma(\tau_n -i)}{ \gamma^2(\tau_n-i)^2} + O_P \Big(\frac{(X_{i}- \gamma(\tau_n -i))^2}{(\tau_n-i)^3}\Big)\Big) \\
&= \frac 1\gamma\log \frac{\tau_n-j}{\tau_n-k} -  \sum_{i=j+1}^k \frac{X_{i}- \gamma(\tau_n -i)}{\gamma^2(\tau_n-i)^{2}}  +
  O_P \big( (\tau_n-k)^{  \frac 2\alpha-2+\varepsilon}\big) \ ,
\end{align*}
where the $O_P(\cdot)$ holds uniformly for all $0 \le j \le k < \tau_n$. Also from Lemma \ref{uniformEstimate}
\[ \sum_{i=j+1}^k \frac 1{X_i^2} = O_P \Big( \sum_{i=j+1}^k \frac 1{(\tau_n-i)^2}\Big) = O_{P} \big((\tau_n-k)^{-1}\big) \ , \]
therefore for $0 \le j \le k<\tau_n$, since $1<\alpha < 2$,
\begin{align*}
\Pi_j^k  = \Big(\frac{\tau_n-k}{\tau_n-j}\Big)^{\frac 1\gamma} \exp\Big( \sum_{i=j+1}^k \frac{X_{i}- \gamma(\tau_n -i)}{\gamma^2(\tau_n-i)^{2}} + O_P \big( (\tau_n-k)^{  \frac 2\alpha-2+\varepsilon}\big) \Big) \ .
\end{align*}
This shows \eqref{Pi1}. Finally from Lemma \ref{uniformEstimate}, since $\alpha >1$,
\[ \sum_{i=j+1}^k \frac{X_{i}- \gamma(\tau_n -i)}{(\tau_n-i)^{2}} = O_P \Big(\sum_{i=j+1}^k (\tau_n-i)^{\frac 1\alpha -2 + \varepsilon}\Big)= O_P\big((\tau_n-k)^{\frac 1\alpha -1+ \varepsilon}\big)\ , \]
hence also \eqref{Pi2} follows.
\end{proof}

\begin{proof}[Proof of Lemma \ref{asymexp}]
We shall approximate $\ell_n$ step by step. First let 
\[ \mathfrak l_n= \sum_{k=0}^{\tau_n-1} \frac{\mathbf E[Y_k \mid X]}{X_k^\alpha}= \sum_{k=0}^{\tau_n-1} X_k^{1-\alpha}\Pi_0^k  \ . \]
To obtain an approximation to this expression we use the following expansion, following from Lemma \ref{uniformEstimate}: Uniformly for $k < \tau_n$ 
\begin{align*}
X_k^{1-\alpha} = (\gamma(\tau_n-k))^{1-\alpha} &+ (1-\alpha)(\gamma(\tau_n-k))^{-\alpha}(X_k-\gamma(\tau_n-k) )\\
&\mbox{}\quad + O_P\big((\tau_n-k)^{-\alpha-1}(X_k-\gamma (\tau_n-k))^2\big) 
\end{align*}
and thus
\begin{align*}
X_k^{1-\alpha} = (\gamma(\tau_n-k))^{1-\alpha}\Big(1- \frac {X_k-\gamma (\tau_n-k)}{\gamma^2 (\tau_n-k)} + O_P\big( (\tau_n-k)^{\frac 2\alpha-2 + \varepsilon}\big) \Big)\ . 
\end{align*}
Together with Lemma \ref{Pi} we obtain (taking the first summand $n^{1-\alpha}$ apart)
\begin{align*}
\mathfrak l_n= n^{1-\alpha}+(\gamma \tau_n)^{1-\alpha} \sum_{k=1}^{\tau_n-1} \Big(1 &+\sum_{i=1}^k \frac{X_{i}- \gamma(\tau_n -i)}{\gamma^2(\tau_n-i)^{2}}\\ &\mbox{}  - \frac {X_k-\gamma (\tau_n-k)}{\gamma^2 (\tau_n-k)} + O_P\big( (\tau_n-k)^{\frac 2\alpha-2 + \varepsilon}\big) \Big)\ .
\end{align*}
Now
\[ \sum_{k=1}^{\tau_n-1} \sum_{i=1}^k \frac{X_i-\gamma(\tau_n-i)}{\gamma^2(\tau_n-i)^2} = \sum_{i=1}^{\tau_n-1}\frac{X_i-\gamma(\tau_n-i)}{\gamma^2(\tau_n-i)} \ ,\]
such that (surprisingly) matters simplify to
\[ \mathfrak l_n =  n^{1-\alpha} +(\gamma \tau_n)^{1-\alpha} \big(\tau_n + O_P( \tau_n^{\frac 2\alpha-1 + \varepsilon})\big) \]
or, using Lemma \ref{blockcounting},
\begin{align} 
\mathfrak l_n =  \gamma^{1-\alpha}\tau_n^{2-\alpha} + O_P( n^{\frac 2\alpha-\alpha + \varepsilon}) \ . \label{lambda}
\end{align}

Next we consider
\[ \mathfrak l_n'= \sum_{k=0}^{\tau_n-1} \frac{Y_k}{X_k^\alpha} \ . \]
Putting $G_k= Y_k-\mathbf E[Y_k\mid X]$, we obtain, since $G_0=0$,
\[ \mathfrak l_n'-\mathfrak l_n = \sum_{k=1}^{\tau_n-1} \frac{G_k}{X_k^\alpha}   \ . \]
Now from \eqref{eq3} and \eqref{XU}
\[ Y_k = Y_{k-1}\frac{X_k-1}{X_{k-1}} - \tilde H_k \ , \quad \text{with } \tilde H_k=H_k- U_k\frac {Y_{k-1}}{X_{k-1}} \ ,\]
and thus from \eqref{eq4}
\[  G_k = G_{k-1}\frac{X_k-1}{X_{k-1}} - \tilde H_k\]
or
\[ \frac{G_k}{X_k\Pi_0^k} = \frac{G_{k-1}}{X_{k-1}\Pi_0^{k-1}} -  \frac{\tilde H_k}{X_k\Pi_0^k} \ . \]
Iterating this relation we obtain because of $G_0=0$
\begin{align} G_k = -X_k\Pi_0^k\sum_{i=1}^k \frac{\tilde H_i}{X_i\Pi_0^i} \ , 
\label{G}
\end{align}
thus
\[ \mathfrak l_n-\mathfrak l_n' = \sum_{k=1}^{\tau_n-1} \frac{\Pi_0^k}{X_k^{\alpha -1}} \sum_{i=1}^k \frac{\tilde H_i}{X_i\Pi_0^i} = \sum_{i=1}^{\tau_n-1}\frac{\tilde H_i}{X_i} \sum_{k=i}^{\tau_n-1} \frac{\Pi_i^k}{X_k^{\alpha -1}} \ . \]
Now note that given $ X,Y_0, \ldots,Y_{i-1}$ the random variable $\tilde H_i$ has a hypergeometric distribution centered at expectation. In fact $(\tilde H_i)$ forms a sequence of martingale differences w.r. to the filtration generated by $(X, Y_0,Y_1,\ldots, Y_i)$, $i \ge 0$,  therefore $\tilde H_1, \tilde H_2, \ldots$ are uncorrelated given $X$. Also from the formula for the variance of a hypergeometric distribution 
\begin{align}\mathbf E[\tilde H_i^2 \mid X, Y_0, \ldots, Y_{i-1}] \le U_i \frac{Y_{i-1}}{X_{i-1}} \ . 
\label{H}
\end{align}
This entails
\begin{align*}
\mathbf E[(\mathfrak l_n'-\mathfrak l_n)^2 \mid X] &\le  \sum_{i=1}^{\tau_n-1}\frac 1{X_i^2}\frac{U_i\mathbf E[Y_{i-1}\mid X]}{X_{i-1}} \Big(\sum_{k=i}^{\tau_n-1} \frac{\Pi_i^k}{X_k^{\alpha -1}}\Big)^2 \\
&= \sum_{i=1}^{\tau_n-1}U_i\frac {\Pi_0^{i-1}}{X_i^2}\Big(\sum_{k=i}^{\tau_n-1} \frac{\Pi_i^k}{X_k^{\alpha -1}}\Big)^2\ .
\end{align*}
Using a Taylor expansion similar as in the proof of Lemma  \ref{Pi} and applying Lemma \ref{uniformEstimate} we see that  $X_i^{-1}$ is of order $(\tau_n-i)^{-1}$ uniformly in $0\le i < \tau_n$. This together with Lemma \ref{Pi} implies
\begin{align*} 
\mathbf E[&(\mathfrak l_n'-\mathfrak l_n)^2 \mid X] \\&= O_P \Big( \sum_{i=1}^{\tau_n-1} U_i \Big(\frac{\tau_n-i+1}{\tau_n}\Big)^{\frac 1\gamma}\frac 1{(\tau_n-i)^2} \Big(\sum_{k=i}^{\tau_n-1}\Big( \frac{\tau_n-k}{\tau_n-i}\Big)^{\frac 1\gamma}\frac 1{(\tau_n-k)^{\alpha-1}} \Big)^2\Big) \ .
\end{align*}
Since $\frac 1\gamma = \alpha-1$, this boils down to
\[\mathbf E[(\mathfrak l_n'-\mathfrak l_n)^2 \mid X] = O_P \Big( \tau_n^{1-\alpha} \sum_{i=1}^{\tau_n-1}  U_i( \tau_{n}-i)^{1-\alpha} \Big)
\]
Again from Lemma \ref{uniformEstimate} $X_{i-1}$ is of order $\tau_n-i$ uniformly in $0\le i < \tau_n$. Also $U_i = X_{i-1}-X_i+1 \le 2(X_{i-1}-X_i)$, therefore
\[\mathbf E[(\mathfrak l_n'-\mathfrak l_n)^2 \mid X] = O_P \Big(\tau_n^{1-\alpha} \sum_{i=1}^{\tau_n-1} X_{i-1}^{1-\alpha}(X_{i-1}-X_i) \Big)\ . \]
Moreover 
\[ \sum_{i=1}^{\tau_n-1} X_{i-1}^{1-\alpha}(X_{i-1}-X_i) \le \int_{X_{\tau_n}}^{X_0} x^{1-\alpha} \, dx \le \frac 1{2-\alpha} X_0^{2-\alpha} \ . \]
Since $X_0=n$, we end up with
\[\mathbf E[(\mathfrak l_n'-\mathfrak l_n)^2 \mid X] = O_P (n^{3-2\alpha}) \ . \]
Chebychev's inequality and dominated convergens now imply for any $\varepsilon >0$
\begin{align*}
\mathbf P( |\mathfrak l_n'-\mathfrak l_n| > n^{\frac 32 - \alpha+ \varepsilon}) &= \mathbf E\big[ \mathbf P( |\mathfrak l_n'-\mathfrak l_n| > n^{\frac 32 - \alpha+ \varepsilon})\mid X\big] \\
&\le \mathbf E\big[ 1 \wedge (n^{-3+ 2\alpha - 2\varepsilon} \mathbf E[(\mathfrak l_n'-\mathfrak l_n)^2 \mid X] )\big] = o(1)
\end{align*}
thus $ \mathfrak l_n'-\mathfrak l_n = O_P(n^{\frac 32 - \alpha+ \varepsilon})$. Combined with \eqref{lambda} we obtain
\begin{align}
\mathfrak l_n' =\gamma^{1-\alpha}\tau_n^{2-\alpha} + O_P( n^{\frac 2\alpha-\alpha + \varepsilon}+n^{\frac 32 - \alpha+ \varepsilon}) \label{lambda2}
\end{align}
for all $\varepsilon >0$.

Next we prepare to incorporate the waiting times between mergers into our expression. Let $W_1, W_2, \ldots$ be independent exponential random variables with mean 1, also independent from $(X,Y)$. Let
\[ \mathfrak l_n'' = \sum_{k=0}^{\tau_n-1} \frac {Y_k}{X_k^\alpha} W_k \ .\]
Then
\[\mathbf E[(\mathfrak l_n''- \mathfrak l_n')^2 \mid X,Y] = \sum_{k=0}^{\tau_n-1} \frac{Y_k^2}{X_k^{2\alpha}} \le \sum_{k=0}^{\tau_n-1} \frac{Y_k}{X_k^{2\alpha-1}} \, \]
and therefore
\[\mathbf E[(\mathfrak l_n''- \mathfrak l_n')^2 \mid X]  \le \sum_{k=0}^{\tau_n-1} \frac{\mathbf E[Y_k \mid X]}{X_k^{2\alpha-1}}= \sum_{k=0}^{\tau_n-1} X_k^{2-2\alpha}\Pi_0^k \ . \]
From Lemmas \ref{uniformEstimate}, \ref{Pi} and $\alpha <2$ it follows that
\[\mathbf E[(\mathfrak l_n''- \mathfrak l_n')^2 \mid X]  = O_P(\tau_n^{1-\alpha}\sum_{k=0}^{\tau_n-1} (\tau_n-k)^{1-\alpha} ) = O_P(n^{3-2\alpha}) \ .\]
As above we may conclude $ \mathfrak l_n''-\mathfrak l_n' = O_P(n^{\frac 32 - \alpha+ \varepsilon})$ and consequently from \eqref{lambda2}
\begin{align}
\mathfrak l_n'' =\gamma^{1-\alpha}\tau_n^{2-\alpha} + O_P( n^{\frac 2\alpha-\alpha + \varepsilon}+n^{\frac 32 - \alpha+ \varepsilon}) \label{lambda3} \ . 
\end{align}

In a last step we add the coalescence rate $\lambda_M$ into our formulas. 
From Lemmas \ref{uniformEstimate} and \ref{Pi}
\begin{align}\label{last}
\mathbf E\Big[ \frac {Y_k}{X_k^{\alpha+1}} W_k \mid X \Big] = \frac {\Pi_0^k}{X_k^\alpha} =O_P(\tau_n^{1-\alpha}(\tau_n-k)^{-1}) \ ,
\end{align}
where the $O_P(\cdot)$ holds uniformly for all $0\le k\le \tau_n$. Therefore, taking the sum in \eqref{last} and using Markov's inequality, 
\[ \sum_{k=0}^{\tau_n-1} \frac{Y_k}{X_k^{\alpha+1}} W_k = O_P\Big(\tau_n^{1-\alpha} \sum_{k=0}^{\tau_n-1} (\tau_n-k)^{-1}\Big)= O_P (\tau_n^{1-\alpha} \log \tau_n) \ . \]
Combined with \eqref{rates} and \eqref{lambda3} this gives
\[ \sum_{k=0}^{\tau_n-1} \frac {Y_k}{\lambda_{X_k}} \, W_k = \alpha\Gamma(\alpha) \gamma^{1-\alpha}\tau_n^{2-\alpha} + O_P( n^{\frac 2\alpha-\alpha + \varepsilon}+n^{\frac 32 - \alpha+ \varepsilon}) \ .  \]
Now the left-hand quantity is equal to $\ell_n$ in distribution. This proves Lemma~\ref{asymexp}.
\end{proof}

\begin{proof}[Proof of Theorem \ref{mainresult3}]
The first statement is a direct consequence of Lemma \ref{uniformEstimate} with $\varepsilon=1-1/\alpha$ and Lemma \ref{blockcounting}. For the proof of the second one note that in view of monotonicity of $(Y_k)$ it is sufficient to show that for $0<c < 1$ and $k_n \sim c \tau_n$
\[ \frac{ Y_{k_n}}{n} \to (1-c)^{\alpha} \]
in probability. From \eqref{YX},  \eqref{Pi2} and the first statement of the theorem
\[ \frac{\mathbf E[ Y_{k_n} \mid X]}n = \frac{X_{k_n}}n \cdot \Pi_0^{k_n} \to (1-c) \cdot (1-c)^{\alpha -1}\]
in probability. Thus it remains to prove that $G_{k_n}= Y_{k_n}- \mathbf E[Y_{k_n} \mid X]$ is of order $o_P(n)$. Now from \eqref{G}, \eqref{H} and the explanation given there,
\[ \mathbf E[G_k^2 \mid X]  \le (X_k\Pi_0^k)^2 \sum_{i=1}^k \frac 1{(X_i\Pi_0^i)^2} U_i \Pi_0^{i-1} \ .  \]
As in the proof of Lemma \ref{asymexp} (see the arguments following the inequality \eqref{H}) we mutually replace $X_i$ and $\tau_n-i$ as well as $\Pi_0^i$ and $(\tau_n-i)^{1/\gamma}/\tau_n^{1/\gamma}$ to obtain uniformly in $k$
\begin{align*} \mathbf E[G_k^2 \mid X] &= O_P \Big(\frac{ (\tau_n-k)^{2\alpha}}{\tau_n^{\alpha -1}}\sum_{i=1}^k (\tau_n-i)^{-\alpha -1} U_i \Big)\\
&=  O_P \Big(\frac{ (\tau_n-k)^{2\alpha}}{\tau_n^{\alpha -1}}\sum_{i=1}^k X_i^{-\alpha-1}(X_{i-1}-X_i)\Big)
\end{align*}
and uniformly in $k$
\[ \mathbf E[G_k^2 \mid X] =O_P\Big( \frac{ (\tau_n-k)^{2\alpha}}{\tau_n^{\alpha -1}} X_k^{-\alpha}\Big) =O_P(\tau_n)= O_P(n) \ . \]
This implies that $G_{k_n}$ is of order $O_P(\sqrt n)$, which gives our claim.
\end{proof}

\end{document}